%
%
%
%
%
\documentclass{svjour3}                     
\smartqed  
\usepackage{amsmath,amsfonts,amssymb,amscd}

\usepackage{graphicx}
%
%
%
%
%
\begin{document}

\title{Optimal regularity for the Signorini problem
}

\author{Nestor Guillen}


\institute{Nestor Guillen \at
              Department of Mathematics. University of Texas at Austin\\
              \email{nguillen@math.utexas.edu}           
           }

\date{Received: date / Accepted: date}

\maketitle

\begin{abstract}
We prove under general assumptions that solutions of the thin obstacle or Signorini problem in any space dimension achieve the optimal regularity $C^{1,1/2}$. This improves the known optimal regularity results by allowing the thin obstacle to be defined in an arbitrary $C^{1,\beta}$ hypersurface, $\beta>1/2$, additionally, our proof covers any linear elliptic operator in divergence form with smooth coefficients. The main ingredients of the proof are a version of Almgren's monotonicity formula and the optimal regularity of global solutions.\\

Mathematical Subject classifications: 35R35,74G40

\keywords{Signorini \and Optimal regularity \and Monotonicity formula}
\end{abstract}

\section{Introduction} \label{intro}

In this note we look at solutions of the Signorini or thin obstacle problem and make some remarks about their optimal regularity. The thin obstacle problem consists in minimizing a given functional (associated to an elliptic operator) among all functions that remain above a given obstacle which is defined only on a thin set of the domain, i.e. on a set of codimension 1. More specifically, we are given the following data

\begin{equation}\label{thinobs}
	\begin{array}{rl}
	 \mbox{a functional } & \mathcal{F}(u)=\int_{\Omega} \frac{1}{2}\left ( a(x) \nabla u , \nabla u\right ) +u(x)f(x)dx \\
	\mbox{a hypersurface}  & M \subset \mathbb{R}^n \\
	\mbox{functions } & \phi: M \to \mathbb{R, }\mbox{ }  g: \partial \Omega \to \mathbb{R}\\
	\end{array}\end{equation}

then the problem is to find the minimizer of $\mathcal{F}$ among all $u \in H^1(\Omega)$ wich agree with $g$ on $\partial \Omega$ and such that $u \geq \phi$ on $M \cap \Omega$, both understood in the trace sense. We extend the optimal regularity achieved previously under more restricted conditions (see below), our main result is

\begin{theorem}\label{thm:main} 
Assume that $M$ and $\phi$ are both of class $C^{1,\beta}$ where  $\beta > 1/2$, $a(x)$ is an uniformly elliptic matrix of class $C^{1,\gamma}$, (for any $\gamma>0$) and $f(x)$ is any H\"older continuous function, then the solution of the thin obstacle problem is a $C^{1,1/2}$ function on both sides of the hypersurface $M$. 

\end{theorem}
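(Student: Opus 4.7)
The strategy is to reduce to a flat Signorini problem with variable coefficients and then exploit an Almgren-type monotonicity formula, in the spirit of the scheme introduced by Athanasopoulos, Caffarelli and Salsa in the flat constant-coefficient case. The two ingredients announced in the abstract enter in two separate stages: the monotonicity formula controls the blow-up behavior, while the classification of global solutions identifies the critical growth rate $3/2$.

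\medskip
\noindent\emph{Step 1 (reduction).} I would localize at a free boundary point $x_0\in M$ and pick a $C^{1,\beta}$ diffeomorphism $\Phi$ flattening $M$ near $x_0$ to $\{y_n=0\}$. Since $\beta>1/2$, the pushed-forward coefficients $\tilde a$ remain of class $C^{0,\beta}$, and the pushed-forward obstacle is $C^{1,\beta}$ on $\{y_n=0\}$. Subtracting a $C^{1,\beta}$ extension $\tilde\phi$ of the obstacle, the new unknown $v=u\circ\Phi^{-1}-\tilde\phi$ is nonnegative on $\{y_n=0\}$, satisfies the Signorini complementarity condition there, and solves a divergence-form equation $\operatorname{div}(\tilde a\,\nabla v)=\tilde f$ in a one-sided neighborhood of the origin, with $\tilde f$ Hölder continuous.

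\medskip
\noindent\emph{Step 2 (Almgren-type monotonicity).} After an affine change of variables normalizing $\tilde a(0)$ to the identity, the frequency function
\[
N(r) \;=\; \frac{r\int_{B_r}\tilde a\,\nabla v\cdot\nabla v\, dy}{\int_{\partial B_r} v^2\, dS}
\]
should satisfy an almost-monotonicity estimate of the form: $r\mapsto e^{Cr^{\beta}}\bigl(N(r)+Cr^{\beta}\bigr)$ is nondecreasing on small $r$. The derivation is the usual one: differentiate numerator and denominator, invoke the Rellich-type identity adapted to $\tilde a$ at the origin, and absorb the commutators coming from $\nabla\tilde a$ and from the inhomogeneity $\tilde f$ into the error term. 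The restriction $\beta>1/2$ enters precisely at this step, as the threshold needed to make the perturbation subcritical with respect to the scaling of $N$.

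\medskip
\noindent\emph{Step 3 (blow-ups and optimal growth).} With monotonicity in hand, consider the Almgren rescalings
\[
v_r(y) \;=\; \frac{v(ry)}{\bigl(r^{1-n}\int_{\partial B_r} v^2\, dS\bigr)^{1/2}}.
\]
The frequency bound provides a uniform control on $\int_{B_1}|\nabla v_r|^2$, so a subsequence converges to a global solution $v_\infty$ of the flat Signorini problem for the constant-coefficient operator $\tilde a(0)$, which after a linear change of variables becomes the Laplacian. The optimal regularity of global solutions forces $N(0^+,v_\infty)\geq 3/2$, with equality realized by the model $3/2$-homogeneous solution $\operatorname{Re}\bigl((y_{n-1}+i|y_n|)^{3/2}\bigr)$. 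Propagating this lower bound back to $v$ via almost-monotonicity yields $\sup_{B_r(x_0)}|v|\leq C r^{3/2}$ at every free boundary point; combined with interior elliptic regularity away from the coincidence set and the $C^{1,\beta}$ regularity of $\tilde\phi$, this translates into $C^{1,1/2}$ estimates for $u$ on either side of $M$.

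\medskip
\noindent The main obstacle is Step 2: in the flat constant-coefficient setting the frequency is genuinely monotone, while here one must carefully track the errors coming from the Hölder regularity of $\tilde a$ and the right-hand side $\tilde f$ and verify that they do not accumulate to destroy monotonicity as $r\to 0^+$. Once almost-monotonicity is secured, the compactness in Step 3, the identification of blow-ups as global solutions, and the final pointwise translation to $C^{1,1/2}$ estimates on $u$ are essentially bookkeeping along the classical lines.
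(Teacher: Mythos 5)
Your overall architecture (frequency function, blow-up, classification of global $3/2$-homogeneous solutions, then translation into a $C^{1,1/2}$ estimate) matches the paper's, but there are two genuine gaps in Step 2 as you have set it up, and they are precisely the points the paper spends most of its effort on. First, your error analysis is circular as stated: the perturbation terms in the Rellich/frequency computation are not harmless ``commutators from $\nabla\tilde a$ and $\tilde f$''; after subtracting the obstacle they include surface integrals over the contact set involving quantities like $[v_\nu]$, $\nabla_\tau v$ and $v\,\Delta v$, whose size can only be estimated if one already knows a growth rate for $v$ better than the Lipschitz/$C^{1,\alpha_0}$ regularity available a priori. The paper resolves this by first proving, via a compactness argument against flat zero-obstacle global solutions, that solutions are $C^{1,\alpha}$ for every $\alpha<1/2$; only then are all error terms of size $O(r^{n+\alpha+\beta})$ with $\alpha+\beta>1$, which is what makes the frequency perturbation ``subcritical.'' This is also where $\beta>1/2$ really enters (one needs $\alpha+\beta>1$ with $\alpha<1/2$), not merely through the Hölder exponent of the flattened coefficients. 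Second, even with those estimates, the errors must be divided by $\int_{\partial B_r}v^2$ (or $\int_{B_r}|\nabla v|^2$), which may degenerate arbitrarily fast; your proposed correction $e^{Cr^\beta}(N(r)+Cr^\beta)$ does not address this. The paper handles it by truncating, i.e.\ working with $\Phi_u(r)=r\frac{d}{dr}\log\max(F_u(r),r^{n+2})$, so that almost-monotonicity only needs to be proved where $F_u(r)>r^{n+2}$ and the denominators are bounded below by $Cr^{n+1}$ via Poincar\'e. Without some such truncation (or an equivalent device) the almost-monotonicity claim is unsubstantiated.

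A smaller but real inaccuracy: after flattening with a $C^{1,\beta}$ diffeomorphism and subtracting a $C^{1,\beta}$ extension $\tilde\phi$ of the obstacle, the right-hand side is $\mbox{div}(\tilde a\nabla\tilde\phi)$, which is only the divergence of a $C^{0,\beta}$ field, not a Hölder continuous function $\tilde f$; the frequency error terms must then be handled in divergence form, which changes (and complicates) the estimates. Note also that your route differs from the paper's in that the paper never flattens $M$: it keeps the curved hypersurface and the original obstacle, and controls the extra boundary terms using $(x\cdot\nu)=O(r^{1+\beta})$ on a $C^{1,\beta}$ surface together with the $C^{1,\alpha}$ regularity. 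Either route can work, but yours must still incorporate the two ingredients above (a preliminary almost-optimal regularity step and a truncated frequency) before Step 3 can be called bookkeeping.
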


This extends recent results by Athanasopoulos and Caffarelli \cite{AC04} and Caffarelli, Salsa and Silvestre \cite{CSS08}, the specific improvement consists in dropping the assumption that $M$ is a hyperplane and allowing for a more general divergence operator and not only the Laplacian. We follow closely the approach in \cite{CSS08} which relies on a modified Almgren's monotonicity formula, here we use yet another modification of this formula.\\

The thin obstacle problem is closely related (when $n=2$) to the study of the equilibrium of an elastic membrane that rests above a very thin object, a problem better known as the Signorini problem. It also appears in many other questions of continuum mechanics in both stationary and non-stationary problems, for instance temperature control and flow through semi-permeable walls, as explained in the book of Duvaut and Lions~\cite{DL72}. We recall a few of the classical works on the regularity for this problem: in ~\cite{F77}, Frehse proves the Lipschitz continuity  in all dimensions; for the 2 dimensional case, Richardson~\cite{R78} showed solutions are $C^{1,1/2}$, which is the optimal regularity as it is well known; later, Caffarelli ~\cite{C79} proved solutions were $C^{1,\alpha}$ in all dimensions for some small $\alpha$, the book by Friedman \cite{F82} discusses this regularity result in detail.\\

Other applications appear in stochastic control, which require solving an obstacle problem for a fractional power of the Laplacian, the regularity for this obstacle problem was studied by Silvestre in \cite{S07} using methods from potential theory, there he obtained almost optimal regularity for a general obstacle and optimal regularity for global solutions. A different approach is the one taken in \cite{CSS08} which is based on the extension technique of Caffarelli and Silvestre (see \cite{CS06}), there it is shown that the obstacle problem for $\Delta^s$ in $\mathbb{R}^{n-1}$ ($0<s<1$) is equivalent to the thin obstacle problem for an operator $L_s$ in $\mathbb{R}^n$ with obstacle defined on an hyperplane $M$ (which is identified with $\mathbb{R}^{n-1}$), when $s=1/2$ the operator $L_s$ reduces to the Laplacian and in that case we are back to the standard thin obstacle problem.\\

All these results require that $M$ be a hyperplane and do not cover the case of an operator with variable coefficients, the aim of the present work is to extend the regularity theory to this situation in the case when $s=1/2$ (the classical thin obstacle problem). We introduce a monotonicity formula that is a bit different from the one in \cite{CSS08}, this formula will be used in future work to study the regularity of the free boundary using classical techniques (see for instance \cite{ACS06} or again \cite{CSS08}).

\section{General discussion}

In order to prove theorem \ref{thm:main} we only need to look at the case $f=0$, to see this, let $u$ be the solution of (\ref{thinobs}) for a non-zero $f$, and let $w$ solve

\begin{equation}
	\begin{array}{rl}
	 \Delta w = f & \mbox{ in } \Omega \\
	 w = 0 & \mbox{ on } \partial \Omega\\
	\end{array}\end{equation}

then $v=u-w$ solves a new thin obstacle problem where the right hand side is zero and where the thin obstacle is given by $\psi=\phi-w$ on the same hypersurface $M$.\\

As it is now standard, the regularity of $u$ will follow from estimates for the Neumann problem, namely, our goal is to show that if $u_\nu$ is the Neumann data for $u$ on $M$ (from either side of $M$) then $u_\nu$ is a $C^{1/2}$ function on $M$, more so, we only need to check this near free boundary points. Since this is an interior type result, we may take $\Omega = B_1$ and assume $0$ is a free boundary point, in fact, we are going to look always at the following normalized situation.\\

\begin{definition}
 
We say $u$ is a normalized solution to the thin obstacle problem if $u$ has the following properties: i) it solves the thin obstacle problem in $B_1$ with $\int_{B_1} u^2dx\leq 1$, ii) the hypersurface $M$ goes through the origin and the normal to $M$ there is $e_n$, iii) the origin is a free boundary point, and $\phi$ satisfies $\phi(0)=0$ and $\nabla_\tau \phi (0) = 0$ (here and from now on $\nabla_\tau$ will denote the gradient on $M$), iv) we also have $a_{ij}(0)=\delta_{ij}$ .\\

\end{definition}

Note that we can always assume we are in a renormalized situation: we can substract from $u$ and $\phi$ the linear part of $\phi$ at $0$ (which is a well-defined linear function in $B_1$ even though $\phi$ is only defined in $M$), this will change the boundary data and the obstacle, but it will not change the set of free boundary points. All we need to show now is that any normalized solution $u$ separates from $0$ like $|x|^{1+1/2}$, then we can conclude that a general solution $u$ separates from its linear part at a free boundary point like $|x|^{1+1/2}$, and therefore $u_\nu$ is of class $C^{1/2}$ on $M$.\\

In all that follows we will also use the following notation: assume $\mathbb{R}^n - M$ has two connected components, then we will call them $M^+$ and $M^-$. Observe that any $M$ that has some boundary may be extended to a bigger $M'$ without boundary, such that it separates $\mathbb{R}^n$ in two connected components and such that $M \cap B_2 = M' \cap B_2$. So when we want to concentrate on what happens on ``one of the sides of M'' we will just look at the intersection of $M^+$ with some other set. Additionally, $\nu$ will be used for the normal to any surface where we are performing integration (and the orientation will be clear from the context).

The paper is organized as follows: in section 3 we recall the main results from \cite{C79} and \cite{AC04} in order to show via compactness that solutions to our problem are $C^{1,\alpha}$ for any $\alpha<1/2$, in section 4 we present Almgren's monotonicity formula or its modification and prove an ``almost monotonicity'' lemma, in section 5 we use the monotonicity formula and the known regularity of global solutions to study blow ups of normalized solutions, finally in section 6 we show that normalized solutions decay as we want near free boundary points and this will complete the proof of theorem 1.

\section{Almost optimal regularity revisited}

Using the compactness method we can obtain the  ``almost''- $C^{1,1/2}$ regularity for our problem. The main idea is using the optimal regularity for blow-up solutions and pay a little price to get regularity for the non-blow up case, seeing it as a perturbation of the blow-up situation. The approach here follows the proof of interior estimates for elliptic equations using the regularity of harmonic functions.

\begin{lemma}\label{lem:compactness}
Let $u$ be a normalized solution to the thin obstacle problem in $B_1$ with obstacle given by $\phi$ on $M$. Then $\forall \epsilon>0$ there exists a $\delta>0$ such that if 

$$M \subset B_1 \cap \{ |x_n|\leq \delta\}$$ 

$$||a_{ij}-\delta_{ij}||_\infty \leq \delta$$

and $||\phi||_\infty \leq \delta$, $||a_{ij}||_{C^{1,\gamma}}\leq 1$ in $B_1\cap M$ then there exists a function $h$ which solves the thin obstacle problem for the Laplacian in $B_{1/2}$, and for this $h$ the thin obstacle is given by the null function on the hyperplane $x_n=0$ and we also have

$$\sup \limits_{B_{1/2}} |u-h| \leq \epsilon$$ 
	
$$h(0)=0, \nabla h(0)=0$$

Additionally, there exists a universal constant $C$ such that $||h||_{H^1(B_{1/2})} \leq C$.

\end{lemma}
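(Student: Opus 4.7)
The plan is a standard compactness-and-contradiction argument, in the spirit of deriving Schauder estimates from the regularity of harmonic functions. Suppose the conclusion fails for some $\epsilon_0 > 0$: then there exist sequences $(M_k, a^k_{ij}, \phi_k)$ satisfying the hypotheses with $\delta_k = 1/k$, and corresponding normalized solutions $u_k$, such that no admissible $h$ lies within $\epsilon_0$ of $u_k$ in sup norm on $B_{1/2}$.

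The key compactness input is the almost-optimal $C^{1,\alpha}$ regularity from \cite{C79} and \cite{AC04}, which applies uniformly in $k$ in the present setting: each $M_k$ is a $C^{1,\beta}$ graph with uniformly bounded norm (since it lies in the slab $\{|x_n| \le 1/k\}$ and has normal close to $e_n$ at the origin), the coefficients $a^k_{ij}$ are uniformly elliptic and uniformly $C^{1,\gamma}$-bounded, and $\|u_k\|_{L^2(B_1)} \le 1$ by the normalization. A Caccioppoli-type estimate (obtained by testing minimality of $u_k$ against $u_k + \eta^2(c - u_k)$ for a smooth cutoff $\eta$ and a constant $c \ge \phi_k$) supplies a uniform $H^1(B_{3/4})$ bound, and the cited regularity upgrades this to a uniform $C^{1,\alpha}(\overline{B_{3/4}})$ bound. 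Arzel\`a--Ascoli then extracts a subsequence converging in $C^{1,\alpha'}(\overline{B_{1/2}})$ for any $\alpha' < \alpha$, and weakly in $H^1$, to a function $h$.

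Next I identify $h$ as an admissible approximation. Since $\|a^k_{ij} - \delta_{ij}\|_\infty \to 0$, passing to the limit in $\mathrm{div}(a^k \nabla u_k) = 0$ away from $M_k$ yields $\Delta h = 0$ on $B_{1/2} \setminus \{x_n = 0\}$. The obstacle condition $h \ge 0$ on $\{x_n = 0\}$ follows from $u_k \ge \phi_k$ on $M_k$ combined with the Hausdorff convergence $M_k \to \{x_n = 0\}$ and $\|\phi_k\|_\infty \to 0$. The Signorini complementarity condition is derived by lifting competitors: given $v \in H^1(B_{1/2})$ with $v \ge 0$ on $\{x_n = 0\}$, one modifies $v$ in a thin neighborhood of $M_k$ to produce a competitor $v_k$ with $v_k \ge \phi_k$ on $M_k$ and losing only $o(1)$ energy, so that $\mathcal{F}^k(u_k) \le \mathcal{F}^k(v_k)$ passes in the limit (using lower semicontinuity on the left) to $\mathcal{F}(h) \le \mathcal{F}(v)$. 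The pointwise normalizations $h(0) = 0$ and $\nabla h(0) = 0$ follow respectively from $u_k(0) = \phi_k(0) \to 0$ and from the fact that $0$ is a contact point of $h$ with the zero obstacle, together with the $C^{1,1/2}$ behavior of solutions to the flat Signorini problem at such points established in \cite{AC04}. The $H^1$ bound on $h$ is inherited from that on $u_k$ by lower semicontinuity, yielding an $h$ that contradicts the standing assumption.

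The principal technical obstacle is the lifting of competitors between the $k$-th and limit problems, since the thin obstacles live on different (though Hausdorff-close) hypersurfaces. The smallness of $\|\phi_k\|_\infty$ together with the uniform $C^{1,\beta}$ geometry of $M_k$ makes it possible to adjust any admissible $v$ via a local modification whose extra Dirichlet energy vanishes with $k$. A secondary delicate point is justifying $\nabla h(0) = 0$: because the normal component of $\nabla u_k(0)$ need not vanish along the sequence, this must be extracted from the structure of free-boundary solutions of the flat limit problem rather than from gradient convergence alone.
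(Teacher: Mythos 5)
Your overall scheme is the same compactness-and-contradiction argument as the paper (uniform interior $C^{1,\alpha}$ bounds from \cite{C79} applied to the sequence $u_k$, extraction of a uniformly convergent subsequence, identification of the limit $h$ as a solution of the flat zero-obstacle problem, contradiction with $\sup_{B_{1/2}}|u_k-h|>\epsilon$). The genuine gap is in your derivation of $\nabla h(0)=0$. You propose to obtain it from the limit alone, namely from ``$0$ is a contact point of $h$ with the zero obstacle'' together with the $C^{1,1/2}$ regularity of \cite{AC04}; this implication is false. The function $h(x)=-|x_n|$ solves the thin obstacle problem with zero obstacle on $\{x_n=0\}$, has $h(0)=0$, is smooth on either side, and its one-sided normal derivatives at $0$ are $\mp 1$; even at a genuine free boundary point the function $\mathrm{Re}(x_1+i|x_n|)^{3/2}+c\,x_n$ is a solution with gradient $c\,e_n\neq 0$ at the origin. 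What the complementarity conditions give at contact or free boundary points is control of the \emph{jump} $[h_\nu]$, never of the one-sided normal derivatives themselves, and $C^{1,1/2}$ regularity does not improve this. So the gradient normalization cannot be recovered from the structure of the limiting flat problem; it has to be transported from the approximating sequence.

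That is exactly how the paper closes this step: since $0$ is a free boundary point of each $u_k$ with $\phi_k(0)=0$ and $\nabla_\tau\phi_k(0)=0$, the normalization of Section 2 gives $\nabla u_k(0)=0$ (the tangential part comes from the obstacle; the normal part is part of the normalization obtained by subtracting the linear part of $u$ at the free boundary point, which is well defined because $[(u_k)_\nu]$ vanishes there by the $C^{1,\alpha_0}$ regularity, and this subtraction only perturbs the obstacle by a $C^{1,\beta}$ function vanishing to first order tangentially at $0$). Combined with the uniform $C^{1,\alpha_0}$ bounds, which make the gradients converge uniformly on each side of $M_k$ up to $M_k$, this passes to the limit and yields $\nabla h(0)=0$. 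Your instinct that the normal component of $\nabla u_k(0)$ is the delicate quantity is correct as a reading of the definition of normalized solution, but the remedy is to build that condition into the normalization of the $u_k$ (as the paper implicitly does), not to look for it in the limit, where it simply does not hold. The remaining components of your argument --- the Caccioppoli/$H^1$ bound, the lifting of competitors between $M_k$ and $\{x_n=0\}$ to identify the limiting variational inequality, and lower semicontinuity for the bound $\|h\|_{H^1(B_{1/2})}\le C$ --- are sound and in fact supply details the paper only asserts.
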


\begin{proof}

Suppose that for some $\epsilon$ we cannot find a $\delta$ with the desired property, then for each $k$ we can find a normalized solution $u_k$ with obstacle $\phi_k$ defined on $M_k$ such that for any $h$ that is (global, flat thin obstacle) solution of the thin obstacle problem we have

$$\sup \limits_{B_r} |u_k-h| > \epsilon$$

and

$$||\phi||_\infty < 2^{-k}$$

$$ M \cap B_1 \subset \{ |x_n| < 2^{-k} \}$$

$$||a^{(k)}_{ij}-\delta_{ij}||_\infty < 2^{-k}$$

observe that the sequence $\{u_k\}$ is bounded in $H^1$, thus by a result of Caffarelli \cite{C79} we know that at least for some small $\alpha_0>0$ each $u_k$ is $C^{1,\alpha_0}$ on $B_{3/4} \cap M^{\pm}_k$ (see section 2 for notation) and these $C^{1,\alpha_0}$ norms are uniformly bounded in $k$. Therefore we can pick a subsequence (wich we also call $u_k$) converging uniformly in $B_{1/2}$ to some function $h \in H^1$, and such that $\nabla u_k \to \nabla h$ uniformly in $B_{1/2}$. Now the uniform $C^{1,\alpha_0}$ regularity of the $u_k$ and the fact that $\nabla u_k(0) = 0$ for each $k$ (remember $0$ was a free boundary point for each of them) shows that $\nabla h(0)=0$.\\

Now the assumptions on $M_k,\phi_k$ and $a^{(k)}_{ij}$ force the limit $h$ to solve the thin obstacle problem for the Laplacian with $M_0 = \{ x_n = 0 \}$, $\phi_0 \equiv 0$. Since $u_k \to h$ uniformly in $B_{1/2}$ we have for $k$ large enough that

$$\sup \limits_{B_1/2}|u_k-h| \leq \epsilon$$ 

since $h$ has all the desired properties we have a contradiction, and this proves the lemma.

\end{proof}

\begin{lemma}\label{lem:keyiter}
Given any $\alpha$ with $0<\alpha<1/2$ there exist constants $\lambda $ $(0<\lambda<1)$ and  $\delta_0>0$  such that given any normalized solution $u$ of the thin obstacle problem for which $||\phi||_\infty \leq \delta_0$  and $M \subset B_1 \cap \{|x_n|\leq \delta_0\}$ we have

$$\sup_{B_\lambda} |u| \leq \lambda^{1+\alpha}$$

\end{lemma}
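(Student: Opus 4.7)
The plan is to derive this lemma as a direct corollary of the compactness Lemma \ref{lem:compactness} combined with the already known optimal $C^{1,1/2}$ regularity for the model problem (flat hypersurface, zero obstacle, Laplacian), i.e.\ the Athanasopoulos--Caffarelli result from \cite{AC04}. The underlying principle is the standard one for perturbation-type estimates: since $u$ is close to a solution of the model problem, and the model problem has better regularity than what we are trying to prove, we win by a margin as long as we rescale correctly.

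More concretely, I would proceed as follows. First, fix $\alpha \in (0,1/2)$. Any function $h$ produced by Lemma \ref{lem:compactness} is a solution of the thin obstacle problem for the Laplacian on the hyperplane $\{x_n = 0\}$ with zero obstacle, satisfies $h(0)=0$, $\nabla h(0)=0$, and has the universal bound $\|h\|_{H^1(B_{1/2})}\leq C$. By the known optimal regularity for this model problem, $h$ is $C^{1,1/2}$ at the origin with a universal estimate, so there exists a universal constant $C_0$ such that
\begin{equation*}
\sup_{B_r} |h| \leq C_0\, r^{3/2} \qquad \text{for all } r \in (0,1/4).
\end{equation*}
Second, since $1+\alpha < 3/2$, choose $\lambda \in (0,1/4)$ small enough that
\begin{equation*}
C_0\, \lambda^{3/2} \leq \tfrac{1}{2}\,\lambda^{1+\alpha},
\end{equation*}
which just amounts to $\lambda^{1/2-\alpha} \leq 1/(2C_0)$. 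Third, feed $\epsilon := \tfrac{1}{2}\lambda^{1+\alpha}$ into Lemma \ref{lem:compactness} to obtain the corresponding $\delta_0 > 0$ and an approximating function $h$.

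Finally, by the triangle inequality,
\begin{equation*}
\sup_{B_\lambda} |u| \;\leq\; \sup_{B_\lambda} |u - h| + \sup_{B_\lambda} |h| \;\leq\; \tfrac{1}{2}\lambda^{1+\alpha} + C_0\,\lambda^{3/2} \;\leq\; \lambda^{1+\alpha},
\end{equation*}
which is the desired conclusion. The only substantial ingredient is the optimal regularity of the model solution $h$; this is where I anticipate the real content to sit, but since we are in the flat/Laplacian/zero-obstacle setting, this is exactly the situation covered by prior work, so we are free to invoke it. Once $C_0$ is in hand as a \emph{universal} constant (which requires the universal $H^1$ bound on $h$ given by Lemma \ref{lem:compactness}, together with an interior optimal regularity estimate at a free-boundary-or-vanishing-gradient point), the choice of $\lambda$ depends only on $\alpha$ and $C_0$, and $\delta_0$ depends only on $\lambda$ and $\alpha$, so both constants are universal as required.
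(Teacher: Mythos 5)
Your argument is correct and is essentially identical to the paper's own proof: both invoke Lemma \ref{lem:compactness} to produce the approximating model solution $h$, use the Athanasopoulos--Caffarelli optimal $C^{1,1/2}$ regularity together with $h(0)=0$, $\nabla h(0)=0$ and the universal $H^1$ bound to get $\sup_{B_r}|h|\leq C_0 r^{3/2}$, then fix $\lambda$ with $C_0\lambda^{1/2-\alpha}\leq 1/2$, set $\epsilon=\tfrac12\lambda^{1+\alpha}$, and conclude by the triangle inequality, taking $\delta_0$ to be the $\delta$ from the compactness lemma.
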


\begin{proof}

For a constant $\epsilon>0$ to be specified later, let $\delta$ and $h$ be as in the previous lemma, then 

$$\sup \limits_{B_1/2}|u-h| \leq \epsilon$$ 

then by the result of Athanasopoulos and Caffarelli \cite{AC04} we know that $h$ is of class $C^{1,1/2}$ on either side of $\{x_n=0\}$, moreover, since $h$ also satisfies $\nabla h(0) = 0$, we have for $r$ small 

$$\sup \limits_{B_r}|h| \leq Cr^{1+1/2}$$

here $C$ is a universal constant (determined by the universal constant in the previous lemma), then we have for every $\lambda <1/2$ that

$$\sup \limits_{B_\lambda} |u| \leq \sup \limits_{B_\lambda} |u-h|+ \sup \limits_{B_\lambda} |h|$$

$$ \leq \epsilon + C\lambda^{1+1/2} $$

by the assumption $\alpha$ is a positive number with $\alpha<1/2$, then we can pick $\lambda$ so that 

$$C\lambda^{1+1/2} \leq \frac{1}{2}\lambda^{1+\alpha}$$

i.e. by taking $\lambda$ small enough so that $C\lambda^{\frac{1}{2}-\alpha} < \frac{1}{2}$, then if we pick $\epsilon = \frac{1}{2}\lambda^{1+\alpha}$ we obtain

$$\sup \limits_{B_\lambda} |u| \leq \lambda^{1+\alpha}$$

then the lemma is proved by letting $\delta_0=\delta$.

\end{proof}

\begin{theorem}\label{thm:almostopt}

Under the same assumptions as theorem 1.1 the solution $u$ is of class $C^{1,\alpha}$ on either side of the hypersurface $M$ and away from $\partial B_1$ for every $\alpha$ such that $\alpha<1/2$. In particular, since $\beta >1/2$, $u$ must be of class $C^{1,\alpha}$ for some $\alpha$ for which $\alpha+\beta>1$.

\end{theorem}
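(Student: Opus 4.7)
The strategy is to iterate Lemma \ref{lem:keyiter} at every free boundary point to obtain pointwise decay of the form $|u(x) - \ell(x)| \leq C|x - x_0|^{1+\alpha}$ (where $\ell$ is a suitable linear function), and then combine this with standard elliptic regularity on either side of $M$ away from the contact set. Fix $\alpha<1/2$ and let $\lambda \in (0,1)$ and $\delta_0>0$ be the constants provided by Lemma \ref{lem:keyiter}. Since the claim is local on either side of $M$, it is enough to prove the estimate at an arbitrary free boundary point, which after translation, rotation, and subtraction of the linear part of $\phi$ (as in Section 2) we may take to be the origin, with $u$ in a normalized configuration.

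The core of the argument is an inductive claim: for all $k \geq 0$, the rescaling
\[
u_k(x) := \frac{u(\lambda^k x)}{\lambda^{k(1+\alpha)}}
\]
is (up to a harmless normalization of its $L^2$ norm) again a normalized solution on $B_1$ whose data satisfy the smallness hypotheses of Lemma \ref{lem:keyiter}, so that the lemma applies and yields $\sup_{B_\lambda}|u_k|\leq \lambda^{1+\alpha}$, which in the original variables reads $\sup_{B_{\lambda^{k+1}}}|u|\leq \lambda^{(k+1)(1+\alpha)}$. To verify the smallness hypotheses at step $k$: the rescaled coefficients $a_{ij}^{(k)}(x)=a_{ij}(\lambda^k x)$ satisfy $\|a_{ij}^{(k)}-\delta_{ij}\|_\infty = O(\lambda^k)$ because $a \in C^{1,\gamma}$ with $a(0)=I$; the rescaled obstacle $\phi_k(x)=\lambda^{-k(1+\alpha)}\phi(\lambda^k x)$ satisfies $\|\phi_k\|_\infty=O(\lambda^{k(\beta-\alpha)})$ because $\phi \in C^{1,\beta}$ with $\phi(0)=0$ and $\nabla_\tau\phi(0)=0$, where the assumption $\beta>1/2>\alpha$ is exactly what is needed; and the rescaled hypersurface $M_k = \lambda^{-k}M$ lies in $\{|x_n|\leq C\lambda^{k\beta}\}$ within $B_1$ because $M \in C^{1,\beta}$ is tangent to $\{x_n=0\}$ at the origin. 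After a single initial dilation to absorb universal constants, all three quantities lie below $\delta_0$ for every $k\geq 0$, so the induction closes.

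A standard interpolation argument then converts the decay $\sup_{B_r}|u| \leq Cr^{1+\alpha}$ at every free boundary point, combined with the uniform $C^{1,\alpha_0}$ bounds on either side of $M$ from \cite{C79}, into the pointwise $C^{1,\alpha}$ estimate $|\nabla u(x)-\nabla u(x_0)|\leq C|x-x_0|^\alpha$ at every free boundary point $x_0$. Away from the contact set, $u$ solves a linear divergence-form equation with Neumann data on $M$ of class at worst $C^{1,\beta}$, so classical Schauder estimates give $C^{1,\alpha}$ regularity on each side of $M$ there; patching these two pieces yields $C^{1,\alpha}$ regularity up to $M$ on either side throughout a neighborhood of the free boundary, hence on all of $B_{1-\eta}$ for any $\eta>0$.

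The main subtlety, and the place where the condition $\beta>1/2$ is actually used, is the smallness of the rescaled obstacle $\phi_k$: if $\beta\leq \alpha$ this quantity would not decay, and the iteration would break down. Since we are free to choose any $\alpha < 1/2$ and $\beta > 1/2$ by hypothesis, the condition $\alpha<\beta$ is automatic, and the same choice supplies the final statement $\alpha+\beta>1$.
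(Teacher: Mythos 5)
Your proposal is correct and follows essentially the same route as the paper: rescale to enter the smallness regime of Lemma \ref{lem:keyiter}, iterate via $u_k(x)=u(\lambda^k x)/\lambda^{k(1+\alpha)}$ to get $\sup_{B_r}|u|\leq Cr^{1+\alpha}$ at free boundary points, and conclude by elliptic (Neumann/Schauder) estimates away from the contact set. Your explicit verification that the rescaled data stay small (rates $\lambda^k$, $\lambda^{k(\beta-\alpha)}$, $\lambda^{k\beta}$) is exactly the "straightforward to check" step the paper leaves implicit, and it is carried out correctly.
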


\begin{proof}

Fix $\alpha$ with $0<\alpha<1/2$, let $u$ be a normalized solution, by Neumann estimates all we need to show is that $u$ decays like $|x|^{1+\alpha}$. By a rescaling of the form $u(x) \to u(tx)$ for some small $t$ we may assume without loss of generality that $\phi$ and $M$ satisfy the conditions of the previous lemma, therefore for some $\lambda<1$

$$\sup \limits_{B_\lambda}|u| \leq \lambda^{1+\alpha}$$
 
define then $u_1(x)=\frac{u(\lambda x)}{\lambda^{1+\alpha}}$, it is straightforward to check that $u_1$ is a normalized solution in $B_1$, the thin obstacle is given by some $M_1$ and $\phi_1$ which again satisfy the conditions of the previous lemma, and this is thanks to the fact that both are of class $C^{1,\beta}$; a similar argument applies to the coefficients $a^{(1)}_{ij}$, reapplying the previous lemma we see that

$$\sup \limits_{B_\lambda} |u_1| \leq \lambda^{1+\alpha}$$ 

which is the same as

$$\sup \limits_{B_{\lambda^2}}|u|\leq \lambda^{2(1+\alpha)}$$

by iterating this argument $k$ times we conclude

$$\sup \limits_{B_{\lambda^k}} |u| \leq \lambda^{2(1+\alpha)}$$

finally, for each every $r$ with $0<r\leq 1$ there is a $k$ such that $\lambda^{k+1} \leq r \leq \lambda^k$, thus

$$\sup \limits_{B_r}|u| \leq \sup \limits_{B_{\lambda^k}}|u| \leq \lambda^{-(1+\alpha)}\lambda^{(1+k)(1+\alpha)}$$

i.e. $\sup \limits_{B_r}|u| \leq Cr^{1+\alpha}$ for some universal $C$, this finishes the proof.

\end{proof}

\section{Monotonicity formula}

Given a normalized solution $u$ to the \emph{Signorini problem } we will study the quantity 

$$F_u(r):=\int_{S_r}u^2d\sigma$$

our main objective is showing that $F_u(r)$ decays at an optimal rate, this averaged rate of decay will imply a strong decay thanks to the fact that $u$ solves an elliptic equation (this will be done in detail in section 6). Our main tool in the study $F_u$ is a version of Almgren's monotonicity formula.\\

Recall that by the almost optimal regularity, we may assume $u$ is a $C^{1,\alpha}$ function on $B_{1/2} \cap M^\pm$ for some $\alpha$ such that $\alpha+\beta=1+\epsilon_0$ for some small $\epsilon_0>0$, this fact will be used extensively in this section.

\begin{theorem}{(Monotonicity formula)}\label{thm:monotonicity}

For a normalized solution $u$, the quantity

$$\Phi_u(r) = r\frac{d}{dr}\log \max (F_u(r),r^{n+2})$$

is almost monotone, specifically, there is a universal constant $C>0$ such that

$$\Phi'_u(r)\geq-Cr^{-1+\epsilon_0}\Phi(r)$$

\end{theorem}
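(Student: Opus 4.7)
The plan is to interpret $\Phi_u$ as a perturbation of Almgren's frequency $N(r)=rD_a(r)/F_u(r)$ with $D_a(r)=\int_{B_r}\langle a\nabla u,\nabla u\rangle\,dx$, and to carefully track the error terms coming from the variable coefficients, the curved hypersurface $M$, and the non-zero obstacle profile $\phi$. First I would split according to whether $F_u(r)>r^{n+2}$: on the closed set where $F_u(r)\le r^{n+2}$ the function $\Phi_u$ equals $n+2$ identically so nothing is to prove, while on the open set $\{F_u(r)>r^{n+2}\}$ differentiation of the identity $F_u(r)=r^{n-1}\int_{\partial B_1}u(r\omega)^2\,d\sigma(\omega)$ gives
$$\Phi_u(r)=(n-1)+\frac{2r\int_{\partial B_r}u\,u_\nu\,d\sigma}{F_u(r)}.$$

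Next I would apply Green's identity in $B_r$, using that $u$ solves $\mathrm{div}(a\nabla u)=0$ off $M$ and satisfies the Signorini complementarity $(u-\phi)J(u)=0$ on $M$, where $J(u)$ denotes the jump of $(a\nabla u)\cdot\nu_M$ across $M$:
$$\int_{\partial B_r}u\,(a\nabla u)\cdot\nu\,d\sigma=D_a(r)+\int_{M\cap B_r}\phi\,J(u)\,d\sigma.$$
The $M$-boundary integral is estimated by combining the normalization $\phi(0)=\nabla_\tau\phi(0)=0$ with $\phi\in C^{1,\beta}$ (yielding $|\phi|\lesssim r^{1+\beta}$) and the almost optimal regularity from Theorem~\ref{thm:almostopt} (yielding $|J(u)|\lesssim r^\alpha$), for an error of relative size $r^{\epsilon_0}$. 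Replacing $(a\nabla u)\cdot\nu$ by $u_\nu$ on $\partial B_r$ contributes a further $O(r^\gamma)D_a(r)$ coming from $a(x)-I$, so $\Phi_u(r)$ matches $(n-1)+2N(r)$ up to a $1+O(r^{\epsilon_0})$ multiplicative and an $O(r^{\epsilon_0})$ additive correction.

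To conclude I would reproduce Almgren's classical computation of
$$(\log N)'(r)=\frac{1}{r}+\frac{D_a'(r)}{D_a(r)}-\frac{F_u'(r)}{F_u(r)},$$
where $D_a'(r)$ is expanded via a Rellich--Pohozaev identity (testing the equation against $x\cdot\nabla u$) to relate the spherical integrals $\int_{\partial B_r}|\nabla u|^2\,d\sigma$ and $\int_{\partial B_r}u_\nu^2\,d\sigma$ with $D_a(r)$, modulo bulk error of size $r^{\gamma-1}D_a(r)$ and boundary error of size $r^{-1+\epsilon_0}D_a(r)$ on $M\cap B_r$, controlled as above. Cauchy--Schwarz between $u$ and $u_\nu$ on $\partial B_r$ then gives $N'(r)\ge -Cr^{-1+\epsilon_0}N(r)$, which transfers to $\Phi_u$ with the same bound after accounting for the correction in the previous step.

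The hardest part will be the Rellich--Pohozaev step together with the control of the boundary integrals on $M\cap B_r$ that it produces: because $M$ is curved, integration by parts against $x\cdot\nabla u$ generates tangential contributions that are absent from the flat case treated in~\cite{CSS08}, and these can be absorbed only thanks to the tangency of $M$ to $\{x_n=0\}$ at the origin with $C^{1,\beta}$ bounds, together with the $C^{1,\alpha}$ regularity from Theorem~\ref{thm:almostopt} satisfying $\alpha+\beta=1+\epsilon_0$. A secondary subtlety is the behaviour at the transition set $\{F_u(r)=r^{n+2}\}$, which should be handled by observing that $\Phi_u$ is continuous there and interpreting the stated inequality distributionally.
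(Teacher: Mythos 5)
Your proposal follows essentially the same route as the paper: restrict to the set $\{F_u(r)>r^{n+2}\}$, write $\Phi_u$ as the Almgren frequency plus an $M$-boundary correction controlled via the complementarity condition, the normalization of $\phi$, and the $C^{1,\alpha}$/$C^{1,\beta}$ bounds with $\alpha+\beta=1+\epsilon_0$, then run the Rellich--Pohozaev identity and Cauchy--Schwarz, exactly as in Lemma~\ref{lem:keylem} and the paper's proof of Theorem~\ref{thm:monotonicity}. The only differences are cosmetic bookkeeping (you keep the divergence-form operator and conormal derivatives exact and perturb off $a-I$ at the end, while the paper absorbs the variable coefficients into $\Delta u=O(r^\alpha)$) and your slightly more explicit treatment of the transition set $\{F_u(r)=r^{n+2}\}$, which the paper passes over.
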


The proof involes a technical estimate which we state as a separate lemma.

\begin{lemma}\label{lem:keylem} Under the same assumptions as before, there is a universal $C>0$ such that for each $r$ for which $F_u(r) > r^{n+2}$ we have the estimate

$$\frac{\int_{S_r}|\nabla u|^2d\sigma}{\int_{B_r}|\nabla u|^2dx} \geq \frac{n-2}{r}+ \frac{\int_{S_r}2u_r^2d\sigma}{\int_{S_r}uu_rd\sigma}-Cr^{-1+\epsilon_0}$$ 
 
\end{lemma}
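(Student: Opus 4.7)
The target inequality is the infinitesimal form of Almgren's frequency identity, with equality expected in the model case where $u$ is harmonic, $M$ is the hyperplane $\{x_n=0\}$, and $\phi\equiv 0$. My plan is therefore to run the two standard calculations behind Almgren's formula --- a Rellich-type identity obtained by pairing the equation with $x\cdot\nabla u$, and an integration by parts relating $\int_{B_r}a\nabla u\cdot\nabla u$ to a boundary integral over $S_r$ --- and to track every deviation from the model case as an error bounded by $Cr^{-1+\epsilon_0}$ relative to the natural normalization.

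For the Rellich step I would multiply $\partial_j(a_{ij}\partial_i u)=0$ (which holds in $B_r\setminus M$) by $x\cdot\nabla u$ and integrate over $B_r^+$ and $B_r^-$ separately, then sum. The boundary contribution on $S_r$ produces $\int_{S_r}(2ru_r^2-r|\nabla u|^2)\,d\sigma$, the bulk contribution produces $(n-2)\int_{B_r}a_{ij}\partial_i u\,\partial_j u$, and the variable-coefficient error $\int_{B_r}(x\cdot\nabla a_{ij})\partial_i u\,\partial_j u$ is controlled by $Cr\int_{B_r}|\nabla u|^2$ using $a_{ij}(0)=\delta_{ij}$ and $a\in C^{1,\gamma}$. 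The two half-ball integrations also leave a jump integral on $M\cap B_r$ whose integrand, by the Signorini complementarity condition, is supported in the contact set $\{u=\phi\}$; replacing $u$ by $\phi$ there, and using $\phi(0)=\nabla_\tau\phi(0)=0$, $\phi\in C^{1,\beta}$ (so $|\phi|\leq Cr^{1+\beta}$ on $M\cap B_r$) combined with the $C^{1,\alpha}$ bound $|\nabla u|\leq Cr^\alpha$ from Theorem~\ref{thm:almostopt}, gives a jump-integral bound of order $r^{n+\alpha+\beta}=r^{n+1+\epsilon_0}$. A parallel integration by parts for $\int_{B_r^\pm}a\nabla u\cdot\nabla u$ produces $\int_{S_r}u(a\nabla u\cdot\nu)\,d\sigma$ plus an analogous Signorini jump integral of the same size, and replacing $a\nabla u\cdot\nu$ by $u_r$ on $S_r$ costs only a further factor $Cr^\gamma$.

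Combining the two identities and dividing by $\int_{B_r}|\nabla u|^2$ reproduces the flat-harmonic identity plus errors; the hypothesis $F_u(r)>r^{n+2}$ is exactly the non-degeneracy needed to convert the absolute error bounds into the relative ones required. It forces, via a trace-inequality comparison with $\int_{B_r}|\nabla u|^2$, a lower bound of the right order on the denominator, so that the jump-integral error of absolute size $r^{n+1+\epsilon_0}$ becomes, after division by $r\int_{B_r}|\nabla u|^2$, a relative error of size $r^{-1+\epsilon_0}$ --- precisely the order claimed --- while the coefficient-perturbation errors divide out to $O(1)$ and are dominated by $Cr^{-1+\epsilon_0}$ for $r<1$.

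The main obstacle will be the bookkeeping of these error terms: organizing the three smallness inputs --- $\beta$ coming from $M$ and from $\phi$, $\alpha$ coming from Theorem~\ref{thm:almostopt}, and $\gamma$ coming from the coefficients --- so that each obstacle-related error carries the precise exponent $\alpha+\beta-1=\epsilon_0$, and none of them spoils the dominant $(n-2)/r$ behavior. The key structural ingredient is the Signorini complementarity condition combined with the identity $\alpha+\beta=1+\epsilon_0$, which is what converts the product of the smallness of $\phi$ and the smallness of $\nabla u$ on $M$ into the desired gain over $r^{-1}$.
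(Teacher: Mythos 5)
Your proposal is correct and follows essentially the same route as the paper: a Rellich-type identity on $B_r\cap M^\pm$ plus a second integration by parts for the Dirichlet energy, with the jump terms on $M$ killed by complementarity together with the normalization $\phi(0)=\nabla_\tau\phi(0)=0$, the $C^{1,\beta}$ flatness of $M$, and the almost-optimal $C^{1,\alpha}$ bound (so each error carries $\alpha+\beta-1=\epsilon_0$), and with $F_u(r)>r^{n+2}$ plus a Poincar\'e/trace comparison giving the lower bound $\int_{B_r}|\nabla u|^2\gtrsim r^{n+1}$ (hence $\int_{S_r}uu_r\gtrsim r^{n+1}$) that turns absolute errors into the stated $Cr^{-1+\epsilon_0}$. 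The only difference is cosmetic: you keep the divergence-form operator and track $(x\cdot\nabla a)\,\partial_iu\,\partial_ju$ explicitly, whereas the paper absorbs the coefficients by writing $\Delta u=O(r^\alpha)$ as a bulk error.
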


Remark: As shown in Lemma 1 of \cite{ACS06}, when $M$ is a hyperplane and $\phi$ is identically zero we have

$$\frac{\int_{S_r}|\nabla u|^2d\sigma}{\int_{B_r}|\nabla u|^2dx} =\frac{n-2}{r}+ \frac{\int_{S_r}2u_r^2d\sigma}{\int_{S_r}uu_rd\sigma}$$ 

which they use to prove that $\Phi_u(r)$ is monotone, the lack of simmetry (i.e. translation/scale invariance) accounts for the appearance of the ``error'' term $-r^{-1+\epsilon_0}$ in the present case. 

\begin{proof}{(of Lemma ~\ref{lem:keylem})}\\

Consider the identity

$$\mbox{div}(|\nabla u|^2x-2(\nabla u \cdot x)\nabla u) = (n-2)|\nabla u|^2-2(\nabla u \cdot x) \Delta u$$

by the $C^{1,\alpha}$ regularity of $u$ and the regularity of the coefficients $a_{ij}$ it can be seen that $\Delta u = O(r^\alpha)$, so that the last term above is $O(r^{\alpha+\beta+1})$, then by Stokes theorem

$$\int_{S_r \cap M^+}(|\nabla u|^2x-2(\nabla u \cdot x)\nabla u) \cdot \nu d\sigma=\int_{B_r \cap M^+}(n-2)|\nabla u|^2dx+O(r^{n+1+\alpha+\beta})$$

the same applies to $B_r \cap M^-$, adding the two formulas we get a term that consists of integrating over the hypersurface $M$, then taking into account the continuity of tangential derivatives across $M$ we get

$$r\int_{S_r}|\nabla u|^2-2u_r^2d\sigma-\int_{M \cap B_r}[u_\nu^2](x \cdot \nu)+2(\nabla_\tau u \cdot x)[u_\nu]d\sigma=(n-2)\int_{B_r}|\nabla u|^2+O(r^{n+1+\alpha+\beta})$$

which we rewrite as

$$\int_{S_r}|\nabla u|^2d\sigma=\int_{S_r}2u_r^2d\sigma+\frac{1}{r}\int_{M \cap B_r}[u_\nu^2](x \cdot \nu)+2(\nabla_\tau u \cdot x)[u_\nu]d\sigma$$

\begin{equation}\label{eqn:keylem1} +\frac{(n-2)}{r}\int_{B_r}|\nabla u|^2+O(r^{n+\alpha+\beta})
\end{equation}

To estimate the integral on $M$, observe first that $(\nabla_\tau u \cdot x)[u_\nu] = O(r^{\alpha+\beta+1})$. Indeed, when $[u_\nu] \neq 0$ we have $u = \phi$, so there $u$ is $C^{1,\beta}$ and since $\nabla u(0) = 0$ there we have  $|\nabla_\tau u(x)|\leq C|x|^\beta$ for some $C$. On the other hand, $|[u_\nu]|\leq Cr^\alpha$ by the $C^{1,\alpha}$ regularity of $u$, thus

$$|(\nabla_\tau u \cdot x)[u_\nu]| \leq |\nabla_\tau u| |x| |[u_\nu]| \leq C|x|^{\alpha+\beta+1}$$

Next, note that since $M$ is $C^{1,\beta}$ $(x\cdot \nu) = O(r^{1+\beta})$, and once again by the $C^{1,\alpha}$ regularity of $\alpha$ we know that $[u_\nu^2]=O(r^{\alpha})$, thus we know that the integral over $M$ is not too big, that is

$$\int_{M \cap B_r}[u_\nu^2](x \cdot \nu)+2(\nabla_\tau u \cdot x)[u_\nu]d\sigma=O(r^{n+\alpha+\beta})$$

plugging this in (\ref{eqn:keylem1})

$$\int_{S_r}|\nabla u|^2d\sigma=\int_{S_r}2u_r^2d\sigma+\frac{(n-2)}{r}\int_{B_r}|\nabla u|^2+O(r^{n+\alpha+\beta-1})$$

which is the same as

$$\frac{\int_{S_r}|\nabla u|^2d\sigma}{\int_{B_r}|\nabla u|^2dx}=\frac{n-2}{r}+\frac{\int_{S_r}2u_r	^2d\sigma}{\int_{B_r}|\nabla u|^2dx}+\frac{O(r^{n+\alpha+\beta-1})}{\int_{B_r}|\nabla u|^2dx}$$

Now recall that for this particular $r$ we have $F_u(r)>r^{n+2}$, therefore by Poincar\'e's inequality we get  $\int_{B_r}|\nabla u|^2dx\geq Cr^{n+1}$ for some universal $C$. Then we have

\begin{equation}\label{eqn:keylem2}\frac{\int_{S_r}|\nabla u|^2d\sigma}{\int_{B_r}|\nabla u|^2dx}\geq\frac{n-2}{r}+\frac{\int_{S_r}2u_r	^2d\sigma}{\int_{B_r}|\nabla u|^2dx}-Cr^{-1+\epsilon_0}\end{equation}

Next, we deal with the term $\int_{B_r}|\nabla u|^2dx$. By Stokes' theorem we have that

$$\int_{B_r \cap M^+}|\nabla u|^2dx=\int_{S_r \cap M^+}uu_rd\sigma+\int_{B_r \cap M}u u_{\nu_+}d\sigma-\int_{B_r \cap M^+}u\Delta udx$$

adding the corresponding identity for $B_r \cap M^-$ we obtain

$$\int_{B_r}|\nabla u|^2dx=\int_{S_r}uu_rd\sigma + \int_{B_r \cap  M}2u [u_{\nu}]d\sigma -\int_{B_r}u\Delta u dx $$

now we estimate the last two integrals on the right. Since for $u>\phi$ we have $[u_\nu]=0$ the boundary integral is only non-zero where $u=\phi$, i.e. where $u$ is $C^{1,\beta}$, and as before $[u_\nu]=O(r^\alpha)$, then

$$\int_{B_r \cap M}2u [u_{\nu}]d\sigma = O(r^{n+\alpha+\beta})$$

furthermore, $u$ decays like $r^{1+\alpha}$ in general, and we already know that $\Delta u = O(r^\alpha)$ so

$$\int_{B_r}u\Delta u dx = O(r^{n+1+2\alpha})$$

we put all this together now and get

$$\int_{B_r}|\nabla u|^2dx=\int_{S_r}uu_rd\sigma +O(r^{n+\alpha+\beta}) $$

In particular, since we already know the integral on the left is bounded from below by $Cr^{n+1}$, we conclude that

\begin{equation}\label{eqn:keylem3}\int_{S_r}uu_rd\sigma \geq Cr^{n+1}\end{equation}

We arrive to the equation

$$\frac{\int_{S_r}2u_r^2d\sigma}{\int_{B_r}|\nabla u|^2dx}=\frac{\int_{S_r}2u_r^2d\sigma}{\int_{S_r}uu_rd\sigma + O(r^{n+\alpha+\beta})}$$

which using the Taylor expansion for $(1-x)^{-1}$ and the inequality (\ref{eqn:keylem3}) leads to

$$\frac{\int_{S_r}2u_r^2d\sigma}{\int_{B_r}|\nabla u|^2dx}=\frac{\int_{S_r}2u_r^2d\sigma}{\int_{S_r}uu_rd\sigma}+O(1)$$

putting this together with (\ref{eqn:keylem2}) we end up with

$$\frac{\int_{S_r}|\nabla u|^2d\sigma}{\int_{B_r}|\nabla u|^2dx}=\frac{n-2}{r}+\frac{\int_{S_r}2u_r^2d\sigma}{\int_{S_r}uu_rd\sigma}+O(r^{-1+\epsilon_0})$$

and this finishes the proof. 

\end{proof}

\begin{proof}{(of Theorem ~\ref{thm:monotonicity})}\\

Consider the function $\Phi_u(r)=r \frac{d}{dr}  \log \max\{F_u(r),r^{n+2}\}$, to estimate $\Phi'_u(r)$ from below we can concentrate in those $r$'s for which $F_u(r) > r^{n+2}$, then in a neighborhood of any such $r$ we have 

$$\Phi_u(r)=r\frac{\int_{S_r}2uu_nd\sigma}{\int_{S_r}u^2d\sigma}+n-1$$

applying Stokes theorem on both sides of $M$ this gives

\begin{equation}\label{eqn:errorterm}
 \Phi_u(r)=r\frac{\int_{B_r}|\nabla u|^2dx}{\int_{S_r}u^2d\sigma}+\frac{\int_{M \cap B_r}2u[u_n]d\sigma}{\int_{S_r}u^2d\sigma}+n-1
\end{equation}

therefore

$$\frac{d}{dr}\log(\Phi_u(r))=\frac{1}{r}+\frac{\int_{S_r}|\nabla u|^2d\sigma}{\int_{B_r}|\nabla u|^2dx}-\frac{(n-1)r^{-1}\int_{S_r}u^2d\sigma+\int_{S_r}2uu_rd\sigma}{\int_{S_r}u^2d\sigma}$$

$$+\frac{d}{dr}\left ( \frac{\int_{M \cap B_r}2u[u_n]d\sigma}{\int_{S_r}u^2d\sigma} \right )$$
	
$$\frac{d}{dr}\log(\Phi_u(r))\geq\frac{2-n}{r}+\frac{\int_{S_r}|\nabla u|^2d\sigma}{\int_{B_r}|\nabla u|^2dx}-\frac{\int_{S_r}2uu_rd\sigma}{\int_{S_r}u^2d\sigma}-Cr^{-1+\epsilon_0}$$

moreover, lemma \ref{lem:keylem} tells us that

$$\frac{\int_{S_r}|\nabla u|^2d\sigma}{\int_{B_r}|\nabla u|^2dx} \geq \frac{n-2}{r}+ \frac{\int_{S_r}2u_r^2d\sigma}{\int_{S_r}2uu_rd\sigma}-Cr^{-1+\epsilon_0} $$

therefore

$$\frac{d}{dr}\left ( \log (\Phi_u(r)\right ) \geq \frac{\int_{S_r}2u_r^2d\sigma}{\int_{S_r}2uu_rd\sigma}-\frac{\int_{S_r}2uu_rd\sigma}{\int_{S_r}u^2d\sigma}-Cr^{-1+\epsilon_0}$$

by Cauchy-Schwartz, the difference on the right is always non-negative, and we conclude that there exist constants $C>0$ and $r_0>0$ such that for all $r<r_0$ we have

$$\frac{d}{dr}\left ( \log (\Phi_u(r)\right )\geq \frac{\int_{S_r}2u_r^2d\sigma}{\int_{S_r}2uu_rd\sigma}-\frac{\int_{S_r}2uu_rd\sigma}{\int_{S_r}u^2d\sigma} -Cr^{-1+\epsilon_0} \geq -Cr^{-1+\epsilon_0}$$

tracing the constants and the ``$O(.)$''estimates the dependence of $C$ can be checked easily.
 
\end{proof}

\begin{corollary}\label{cor:muest}
There exists a constant $C>0$ such that for all $r<1$ we have

$$\Phi_u(r) \geq \mu-Cr^\epsilon_0$$

where $\mu = \liminf \limits_{s\to 0^+} \Phi_u(s)$

\end{corollary}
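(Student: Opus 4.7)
The plan is to integrate the almost-monotonicity inequality $\Phi'_u(r) \geq -Cr^{-1+\epsilon_0}\Phi_u(r)$ supplied by Theorem~\ref{thm:monotonicity}. Rewriting it through an integrating factor, the auxiliary function
$$g(r) \;:=\; \Phi_u(r)\exp\!\left(\frac{C}{\epsilon_0}\, r^{\epsilon_0}\right)$$
satisfies $g'(r) = \exp(\tfrac{C}{\epsilon_0}r^{\epsilon_0})\bigl(\Phi'_u(r) + Cr^{-1+\epsilon_0}\Phi_u(r)\bigr) \geq 0$, and is therefore non-decreasing on every interval where the differential inequality is valid.

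Since $\Phi_u \geq 0$ and $\exp(\tfrac{C}{\epsilon_0}r^{\epsilon_0}) \to 1$ as $r \to 0^+$, the monotone function $g$ has a well-defined right limit at $0$ that coincides with $\mu$, so the $\liminf$ defining $\mu$ is actually a limit. Monotonicity of $g$ then yields $g(r) \geq \mu$ for every $r < 1$, that is,
$$\Phi_u(r) \;\geq\; \mu \exp\!\left(-\frac{C}{\epsilon_0}\, r^{\epsilon_0}\right).$$
To convert this multiplicative bound into the additive form stated in the corollary, I would use $e^{-x} \geq 1-x$ for $x \geq 0$ together with an upper bound on $\mu$ (for instance $\mu \leq g(1/2)$, which is finite and controlled via the representation (\ref{eqn:errorterm}) by the $H^1$-norm of $u$); this lets me absorb the factor of $\mu$ in $\mu\cdot\tfrac{C}{\epsilon_0}r^{\epsilon_0}$ into a new constant.

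The point that requires genuine care is that Theorem~\ref{thm:monotonicity} supplies the differential inequality only on the open set $\{r \,:\, F_u(r) > r^{n+2}\}$, whereas on its complement $\Phi_u \equiv n+2$ is constant. On complementary intervals $g$ is trivially non-decreasing, so the only possible trouble is at a transition between regimes. A short computation shows, however, that at a crossing point $r_0$ the logarithmic derivative of $\max(F_u,r^{n+2})$ can only jump upward (the smaller of the two functions is taking over as the larger), so $\Phi_u$ itself has only upward jumps and $g$ remains non-decreasing across each transition. Piecing these observations together extends the monotonicity of $g$ to all of $(0,1)$, which is exactly what the integration argument requires.
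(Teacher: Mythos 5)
Your argument is correct and is essentially the proof the paper leaves implicit for this corollary: integrate the almost-monotonicity inequality of Theorem~\ref{thm:monotonicity} via the integrating factor $\exp(\tfrac{C}{\epsilon_0}r^{\epsilon_0})$, let $s\to 0^+$ to identify the limit with $\mu$, and convert the multiplicative bound to the additive one using that $\mu$ is nonnegative and finite. Your additional care at transitions between the regimes $F_u(r)>r^{n+2}$ and $\Phi_u\equiv n+2$ (the derivative of the maximum can only jump upward there) addresses a technicality the paper passes over silently, and you handle it correctly.
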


The proof of \ref{thm:monotonicity} shows almost at once the following slightly more general result

\begin{corollary}\label{cor:monotonicity2}
Under the same assumptions as theorem \ref{thm:monotonicity} consider the function

$$\Phi^{(\delta_0)}_u(r) = r\frac{d}{dr}\log \max (F_u(r),r^{n+2+\delta_0})$$

Then, if $\delta_0<\beta-\frac{1}{2}$ the almost monotonicity estimate from theorem \ref{thm:monotonicity} holds for $\Phi^{(\delta_0)}_u$ for some (smaller) $\epsilon_0$.

\end{corollary}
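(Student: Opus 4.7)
The plan is to re-run the proofs of Lemma \ref{lem:keylem} and Theorem \ref{thm:monotonicity} verbatim, only tracking how the threshold $r^{n+2+\delta_0}$ in place of $r^{n+2}$ propagates through the error estimates. As before, it suffices to argue on the open set where $F_u(r)>r^{n+2+\delta_0}$, because on the complement the $\log\max$ has derivative $(n+2+\delta_0)/r$, which is trivially bounded below, and continuity handles the boundary of the two regimes.

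First I would redo the Poincar\'e step that appears after equation (\ref{eqn:keylem2}). Under the new threshold we get the improved lower bound
\[
\int_{B_r}|\nabla u|^2\,dx \;\geq\; C\,r^{n+1+\delta_0}.
\]
Each of the three error terms that arose in the proof of Lemma \ref{lem:keylem} — the divergence identity term of size $O(r^{n+\alpha+\beta})$, the boundary integral over $M\cap B_r$ of size $O(r^{n+\alpha+\beta})$, and the Stokes identity remainder $O(r^{n+\alpha+\beta}) + O(r^{n+1+2\alpha})$ — is divided by $\int_{B_r}|\nabla u|^2\,dx$. With the new Poincar\'e bound they contribute an error of size
\[
O\!\left(r^{\alpha+\beta-2-\delta_0}\right) \;=\; O\!\left(r^{-1+(\alpha+\beta-1-\delta_0)}\right).
\]
By Theorem \ref{thm:almostopt} we may choose $\alpha<1/2$ as close to $1/2$ as we wish while keeping $u\in C^{1,\alpha}$ on either side of $M$; hence whenever $\delta_0<\beta-\tfrac{1}{2}$ we can select $\alpha$ so that $\alpha+\beta-1-\delta_0>0$. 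Setting $\epsilon_0':=\alpha+\beta-1-\delta_0$ gives exactly the analogue of inequality (\ref{eqn:keylem2}) with $r^{-1+\epsilon_0'}$ in place of $r^{-1+\epsilon_0}$. The auxiliary inequality (\ref{eqn:keylem3}) also remains valid with $Cr^{n+1+\delta_0}$ on the right, because the error we subtract is $O(r^{n+\alpha+\beta})$ which is lower order thanks to $\alpha+\beta>1+\delta_0$.

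Having obtained the modified Lemma \ref{lem:keylem}, the proof of monotonicity is then formally identical to that of Theorem \ref{thm:monotonicity}: we compute $\Phi^{(\delta_0)}_u(r)$ as the sum of the Dirichlet quotient and the jump term on $M$, apply Stokes on both sides of $M$, substitute the modified Lemma, and invoke Cauchy--Schwarz to absorb the principal terms into a non-negative combination, leaving only the error $-Cr^{-1+\epsilon_0'}$ on the right. All the $O(\cdot)$ bookkeeping comes out the same way, since the only change in the denominators of the various quotients is the shift from $r^{n+1}$ to $r^{n+1+\delta_0}$ in the Poincar\'e bound, and this is precisely the shift encoded in the final exponent.

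The main obstacle is purely quantitative: verifying that the compactness-based Theorem \ref{thm:almostopt} really does give $C^{1,\alpha}$ regularity for every $\alpha<1/2$ — and not just some fixed $\alpha$ — so that the slack $\beta-\tfrac{1}{2}$ can be fully absorbed into $\delta_0$. Since Theorem \ref{thm:almostopt} is stated for every $\alpha<1/2$, this is available, and the construction of $\epsilon_0'=\alpha+\beta-1-\delta_0>0$ for any given $\delta_0<\beta-\tfrac{1}{2}$ goes through, proving the corollary.
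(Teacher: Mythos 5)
Your proposal is correct and is essentially the paper's own argument: the paper proves the corollary by the one-line remark that every division by a quantity bounded below by $r^{n+2}$ (via Poincar\'e, $\int_{B_r}|\nabla u|^2\,dx\geq Cr^{n+1}$) has room for an extra factor $r^{-\delta_0}$, so the estimate survives provided $\delta_0+\epsilon_0<\alpha+\beta-1$, which is exactly your bookkeeping with $\epsilon_0'=\alpha+\beta-1-\delta_0$ and $\alpha$ chosen close enough to $1/2$. Your write-up simply makes explicit the same propagation of the threshold through Lemma \ref{lem:keylem} and Theorem \ref{thm:monotonicity}.
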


Indeed, one only needs to note that everytime we divided by terms controlled from below by $r^{n+2}$ there was room for a factor of $r^{-\delta_0}$, for a small $\delta_0$. Thus the final estimate remains as long as we pick $\delta_0+\epsilon_0$ smaller than $\alpha+\beta-1$.

\section{Blowup estimates}

We state the main result of this section

\begin{theorem}\label{thm:blowup}

Let $u$ be a normalized solution to the Signorini problem. Then 

$$\liminf \limits_{s \to 0^+} \Phi_u(s) \geq n+2$$

\end{theorem}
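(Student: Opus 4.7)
The plan is a blow-up argument closed off by the optimal regularity of global solutions. Suppose for contradiction that $\mu := \liminf_{s\to 0^+}\Phi_u(s) < n+2$, and let $r_k\downarrow 0$ be a sequence realizing $\Phi_u(r_k)\to\mu$. At each $r_k$ one must have $F_u(r_k)>r_k^{n+2}$; otherwise the max in the definition of $\Phi_u$ coincides locally with $r^{n+2}$ and $\Phi_u(r_k)=n+2>\mu$, contradicting the convergence. In particular, setting $d_k:=\sqrt{r_k^{1-n}F_u(r_k)}$, we have $d_k>r_k^{3/2}$. I would consider the Almgren-type rescalings
\[
u_k(x):=\frac{u(r_kx)}{d_k},\qquad \phi_k(x):=\frac{\phi(r_kx)}{d_k},\qquad M_k:=r_k^{-1}M,\qquad a^{(k)}(x):=a(r_kx),
\]
so that $u_k$ solves the thin obstacle problem in $B_{1/r_k}$ with data $(M_k,\phi_k,a^{(k)})$ and $\int_{S_1}u_k^2\,d\sigma=1$.

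Next I would set up compactness. Integrating the almost-monotonicity inequality $\Phi_u'\geq-Cr^{-1+\epsilon_0}\Phi_u$ from $r$ to $1$ yields not only the lower bound of Corollary~\ref{cor:muest} but also the matching upper bound $\Phi_u(r)\leq C_0$ on $(0,1]$, with $C_0$ controlled by $\Phi_u(1)$ and hence by the $H^1$-norm. Since $\Phi_u$ equals $rF_u'/F_u$ wherever the max is attained by $F_u$, this upper bound integrates to a polynomial estimate $F_u(r_kR)/F_u(r_k)\leq CR^{C_0}$ as long as $r_kR\leq 1$, and hence $\int_{S_R}u_k^2\leq CR^{C_0}$. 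Combined with the divergence identity used in the proof of Lemma~\ref{lem:keylem} this gives uniform $H^1(B_R)$ bounds, and combined with the almost-optimal regularity from Theorem~\ref{thm:almostopt} it gives uniform $C^{1,\alpha}$ bounds on each side of $M_k$ in every $B_R$. Extracting a subsequence, $u_k\to u_\infty$ in $C^{1,\alpha}_{loc}(\mathbb{R}^n\setminus\{x_n=0\})$, with $u_\infty$ nontrivial because $\int_{S_1}u_\infty^2\,d\sigma=1$.

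I would then identify the limiting equation. Since $M$ is $C^{1,\beta}$ with normal $e_n$ at $0$ one has $M\cap B_r\subset\{|x_n|\leq Cr^{1+\beta}\}$, so $M_k\cap B_R\subset\{|x_n|\leq CR^{1+\beta}r_k^{\beta}\}\to\{x_n=0\}$; since $\phi\in C^{1,\beta}$ with $\phi(0)=0=\nabla_\tau\phi(0)$ and $\beta>\tfrac12$, the bound $|\phi_k(x)|\leq C(r_k|x|)^{1+\beta}/d_k\leq C|x|^{1+\beta}r_k^{\beta-1/2}$ forces $\phi_k\to 0$ uniformly on compacts; and $a^{(k)}\to\delta_{ij}$ by continuity of $a$ with $a(0)=I$. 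Consequently $u_\infty$ is a global solution of the classical thin obstacle problem for the Laplacian on the hyperplane $\{x_n=0\}$ with zero obstacle, with the origin a free boundary point and $\nabla u_\infty(0)=0$.

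Finally, for such a $u_\infty$ the standard (error-free) Almgren monotonicity formula applies. Passing to the limit in the almost-monotonicity $\Phi_u(r_k\rho)\to\mu$ together with the matching upper estimate yields $\Phi_{u_\infty}(\rho)\equiv\mu$ for all $\rho>0$, and the equality case of classical Almgren forces $u_\infty$ to be homogeneous of degree $k_0=(\mu-n+1)/2$. But by the optimal regularity of global solutions for the flat Signorini problem proved in~\cite{AC04}, $u_\infty\in C^{1,1/2}$ on each side of $\{x_n=0\}$, which combined with $u_\infty(0)=0=\nabla u_\infty(0)$ forces $k_0\geq 3/2$, i.e.\ $\mu\geq n+2$, contradicting the assumption. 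The main obstacle is the uniform compactness at all scales $R\geq 1$: since Theorem~\ref{thm:monotonicity} only provides a one-sided (lower) bound on the derivative of $\Phi_u$, the matching upper bound needed to control the polynomial growth of $F_u$ must be extracted by integrating the logarithmic form of the almost-monotonicity on the fixed interval $(0,1]$.
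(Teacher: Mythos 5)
Your core mechanism is the same as the paper's: rescale by $d_r$, use the almost-monotonicity of Theorem~\ref{thm:monotonicity} to get boundedness of $\Phi_u$ and hence $H^1$ plus uniform $C^{1,\alpha}$ compactness of the rescalings, pass to a global flat zero-obstacle solution, deduce homogeneity of degree $(\mu-n+1)/2$ from constancy of the frequency, and invoke the $C^{1,1/2}$ regularity of \cite{AC04} to force degree $\geq 3/2$. Where you genuinely differ is in the organization of the degenerate case: the paper splits according to whether $\limsup d_r/r^{3/2}$ is finite, runs the blow-up (Lemma~\ref{lem:blowup}) only in the nondegenerate case, and disposes of the case $r^{n+2}\leq F_u(r)\leq Cr^{n+2}$ by a separate direct integration of $\frac{d}{dr}\log F_u$; you instead argue by contradiction along a liminf-realizing sequence and observe that $\mu<n+2$ forces $F_u(r_k)>r_k^{n+2}$, so the blow-up is always available and no case distinction is needed. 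That repackaging is sound and arguably cleaner, and your extraction of the upper bound $\Phi_u\leq C_0$ by integrating the one-sided inequality is exactly what the paper uses implicitly in Lemma~\ref{lem:blowup}.

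Two points should be tightened. First, your claim $\Phi_u(r_k\rho)\to\mu$, and hence $\Phi_{u_\infty}(\rho)\equiv\mu$, is justified only for $\rho\leq 1$: the one-sided almost-monotonicity gives $\Phi_u(s)\leq \Phi_u(r_k)e^{Cr_k^{\epsilon_0}/\epsilon_0}$ only for $s\leq r_k$, while for $\rho>1$ you get no upper bound, so constancy ``for all $\rho>0$'' does not follow. This is harmless, because homogeneity of $u_\infty$ in $B_1$ already yields the contradiction with \cite{AC04}; correspondingly, the whole-space compactness at scales $R\geq 1$ that you single out as the main obstacle is unnecessary --- the paper only ever blows up inside $B_1$ and gets convergence in $H^1(B_{1/2})$. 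Second, to identify $\Phi_u(r_k\rho)$ with the frequency of $u_k$ at radius $\rho<1$, to make the error term $r\int_{M\cap B_r}2u[u_\nu]\,d\sigma/\!\int_{S_r}u^2\,d\sigma$ of size $O(r^{\epsilon_0})$, and to guarantee $\int_{S_\rho}u_\infty^2>0$, you need $F_u(s)>s^{n+2}$ at the intermediate scales $s=r_k\rho$, not just at $s=r_k$. This does follow from your own observation combined with the upper bound above: once $\Phi_u(r_k)e^{Cr_k^{\epsilon_0}/\epsilon_0}<n+2$, any open set of radii $s<r_k$ with $F_u(s)<s^{n+2}$ would give $\Phi_u(s)=n+2$ there, a contradiction, and then $F_u(r_k\rho)\geq F_u(r_k)\rho^{C_0}$ gives the nondegeneracy of $\int_{S_\rho}u_k^2$. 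But this step is what makes the passage to the limit in the frequency legitimate, so it should be stated explicitly rather than absorbed into ``passing to the limit in the almost-monotonicity.''
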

 
We will prove this by a blowup argument, so assume $u$ is a normalized solution and let 

$$d_r = \left ( r^{-(n-1)}\int_{S_r}u^2d\sigma \right )^{1/2} = \left ( r^{-(n-1)}F_u(r)\right )^{1/2}$$

and define $u_r(x) := \frac{1}{d_r}u(rx)$ for $0 < r \leq 1$. Observe that $u_r$ is still normalized solution, and note by changing variables that $\Phi_u(r)=\Phi_{u_r}(1)$. The following argument is a variation of the one from Lemma 6.2 in \cite{CSS08}.

\begin{lemma}\label{lem:blowup}
Suppose $r^{-3/2}d_r \to +\infty$ as $r \to 0^+$, then there exists a sequence $r_k>0$ with $r_k\to 0$ as $k \to \infty$, and a nonzero $u_0\in H^1(B_1)$ such that (writing $u_k$ for $u_{r_k}$)

$$u_k \to u_0 \mbox{ in } H^1(B_{1/2})$$

Where $u_0$ solves the thin obstacle problem for a null obstacle on a hyperplane and it is a homogeneous function of degree $(\Phi_u(0)-n+1)/2$.

\end{lemma}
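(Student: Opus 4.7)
The plan is a classical blowup argument: rescale $u$, extract a limit by compactness, then use the monotonicity formula (Theorem~\ref{thm:monotonicity}) to identify the limit as a homogeneous solution of the flat Signorini problem. With $u_k := u_{r_k}$ I have $\int_{S_1}u_k^2\,d\sigma=1$ by construction, and each $u_k$ solves a thin obstacle problem in $B_{r_k^{-1}}$ with obstacle $\phi_k(x):=\phi(r_k x)/d_{r_k}$ on $M_k := r_k^{-1}M$ and coefficients $a^{(k)}_{ij}(x):=a_{ij}(r_k x)$. The hypothesis $r^{-3/2}d_r\to\infty$ is exactly $F_u(r)/r^{n+2}\to\infty$, placing us in the regime where $\Phi_u$ is the honest logarithmic derivative of $F_u$. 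Combined with $\beta>1/2$, this same hypothesis trivializes the rescaled data: since $\phi(0)=0=\nabla_\tau\phi(0)$ and $\phi\in C^{1,\beta}$,
\[
  \|\phi_k\|_{L^\infty(B_R)}\;\leq\;CR^{1+\beta}\,r_k^{\beta-1/2}\bigl(r_k^{3/2}/d_{r_k}\bigr)\;\longrightarrow\;0,
\]
and an analogous scaling gives $M_k\to\{x_n=0\}$ and $a^{(k)}_{ij}\to\delta_{ij}$ locally uniformly.

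Next I would set up uniform estimates. The almost-monotonicity from Theorem~\ref{thm:monotonicity} guarantees that $\Phi_u(r)$ is bounded on $(0,1)$ (since $\Phi_u(r)\,\exp(Cr^{\epsilon_0}/\epsilon_0)$ is monotone and bounded), and the identity $\Phi_u(r)=r\int_{B_r}|\nabla u|^2/F_u(r)+O(r^{\epsilon_0})$ extracted from the proof of that theorem then translates into a uniform $H^1(B_1)$ bound for the $u_k$. Because $\phi_k,M_k,a^{(k)}$ satisfy the smallness assumptions of Lemma~\ref{lem:keyiter} for all large $k$, Theorem~\ref{thm:almostopt} applies to each $u_k$ with uniform constants, yielding uniform $C^{1,\alpha}$ bounds on either side of $M_k$ in $B_{1/2}$. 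Arzelà--Ascoli then produces a subsequence $u_k\to u_0$ strongly in $C^1_{\mathrm{loc}}(B_{1/2}\setminus\{x_n=0\})$ and in $H^1(B_{1/2})$. Passing to the limit exactly as in Lemma~\ref{lem:compactness}, $u_0$ solves the thin obstacle problem for $\Delta$ with zero obstacle on the hyperplane $\{x_n=0\}$, and strong convergence on $S_1$ preserves the normalization, so $u_0\not\equiv 0$.

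For the homogeneity, the key is the scaling identity $\Phi_{u_k}(\rho)=\Phi_u(r_k\rho)$. The almost-monotonicity ensures that $\Phi_u(0):=\lim_{r\to 0^+}\Phi_u(r)$ exists, hence $\Phi_{u_k}(\rho)\to\Phi_u(0)$ for every fixed $\rho>0$; strong $H^1$ convergence transfers this to $\Phi_{u_0}(\rho)\equiv\Phi_u(0)$ for a.e.\ $\rho\in(0,1/2)$. Since $u_0$ solves the standard flat, null-obstacle, constant-coefficient Signorini problem, the classical (error-free) Almgren monotonicity formula holds, and a constant frequency forces $u_0$ to be homogeneous; the degree is read off from $\Phi_{u_0}=(n-1)+2N_{u_0}$, giving $(\Phi_u(0)-n+1)/2$.

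The main technical hurdle is twofold: first, verifying that the rescaled data actually meet the smallness hypotheses of Lemma~\ref{lem:keyiter} uniformly in $k$ (which, given the computation above, uses both $\beta>1/2$ and $r^{-3/2}d_r\to\infty$ in an essential way); second, making the passage $\Phi_{u_k}\to\Phi_{u_0}$ rigorous, which requires control on the derivative of $F_{u_k}$ and on the boundary integrals over $M_k\cap B_\rho$ appearing in~(\ref{eqn:errorterm}). Both of these behave well under blowup because the strong $H^1$ convergence combined with the flattening of $M_k$ forces the error terms in Theorem~\ref{thm:monotonicity} to vanish in the limit.
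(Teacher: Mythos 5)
Your proposal follows essentially the same route as the paper's proof: the almost-monotonicity of $\Phi_u$ gives a uniform bound on the frequency, hence a uniform $H^1$ bound on the rescalings $u_k$; compactness (strengthened by the uniform $C^{1,\alpha}$ estimates) identifies the limit as a solution of the flat, zero-obstacle Signorini problem; the scaling identity $\Phi_{u_k}(\rho)=\Phi_u(r_k\rho)$ together with the vanishing of the error term over $M_k$ makes the limiting frequency constant; and the classical Almgren formula then forces homogeneity of degree $(\Phi_u(0)-n+1)/2$, with your explicit computation that $\|\phi_k\|_\infty\to 0$ (using $\beta>1/2$ and $r^{-3/2}d_r\to\infty$) being a detail the paper leaves implicit. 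The one loose point is the nondegeneracy of $u_0$: you justify it by ``strong convergence on $S_1$,'' but your convergence is only established in $B_{1/2}$; the standard repair, consistent with what the paper defers to the literature, is to use the uniform bound on $\Phi_{u_k}$ to obtain a doubling estimate $F_{u_k}(1/2)\geq c\,F_{u_k}(1)=c>0$ and pass this to the limit on $S_{1/2}$ via the uniform interior estimates.
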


\begin{proof}

First we shall prove $u_r$ is bounded in $H^1$, by the assumption, there exists $r_0>0$ such that $F_u(r)>r^{n+2}$ for $r<r_0$, i.e. $\Phi_u(r) = r\frac{d}{dr}\log F_u(r)$. Since $\Phi_u(r)$ is bounded for small $r$, there exists a number $C>0$ such that 

$$\Phi_u(r)=r\frac{\int_{B_r}2uu_ndx}{\int_{S_r}u^2d\sigma}+n-1 \leq C$$

whenever $r<r_0$. If we know apply Stoke's theorem on both sides of $M$ we are left with

$$\int_{S_r}2uu_nd\sigma = \int_{B_r}|\nabla u|^2dx+\int_M 2u[u_n]d\sigma$$

then, thanks to the fact that $F_u(r)>r^{n+2}$ and that $u$ is $C^{1,\alpha}$ for some $\alpha$ such that $\alpha+\beta>1$, we get that

$$r\frac{\int_{B_r}2uu_ndx}{\int_{S_r}u^2d\sigma} \geq r\frac{\int_{B_r}|\nabla u|^2dx}{\int_{S_r}u^2d\sigma}-Cr^{\epsilon}$$

where $C$ depends on the $C^{1,\alpha}$ norm of $u$ and the $C^{1,\beta}$ norm of $M \cap B_1$. Now doing the	 change of variables $y=rx$ and using the definition of $u_r$ we see that for some $C$ and for all $r<r_0$ we have

$$\int_{B_1}|\nabla u_r|^2dx \leq C \mbox{ and } \int_{S_1}u_r^2d\sigma =1$$

therefore the sequence $\{u_r\}$ is bounded in $H^1(B_1)$, it can be checked (as done in \cite{CS06}) that there is a subsequence (renamed $u_k$) that converges to some non-zero $u_0$ in $H^1$.\\

We now prove that for all $s$ we have

\begin{equation}\label{eqn:blowup1}
\Phi_{u_0}(s)=\Phi_u(0)
\end{equation}

for this we apply the rescaling $x\to r_kx$ to the formula for $\Phi_{u_k}$, we obtain for all $s>0$ and every $k$

\begin{equation}
 \Phi_{u_k}(s)=s\frac{\int_{B_s}2u_k(u_k)_rdx}{\int_{S_s}u_k^2d\sigma}+n-1=\Phi_u(r_ks)
\end{equation}

moreover we can show that for fixed $s$

$$\lim \limits_{k \to \infty}s\frac{\int_{B_s}|\nabla u_k|^2dx}{\int_{S_s}u_k^2d\sigma}=s\frac{\int_{B_s}|\nabla u_0|^2dx}{\int_{S_s}u_0^2d\sigma}$$

and this proves (\ref{eqn:blowup1}) if we note that the ``error term'' (see equation (\ref{eqn:errorterm}))

$$r\frac{\int_{M \cap B_r}2u[u_n]d\sigma}{\int_{S_r}u^2d\sigma}$$

goes to zero as $r \to 0$ in the present case.\\

What we are left with now is a sequence $u_k$ in $H^1$ satisfying

\begin{equation}
	\begin{array}{rl}
	u_k \geq \phi_k & \mbox{on } M_k \cap B_1 \\
	\Delta u_k \leq 0 & \mbox{ in } B_1 \\
	\Delta u_k = 0 & \mbox{away from } \{x \in M_k|u_k(x)=\phi_k(x)\}\\
	\end{array}\end{equation}

we conclude that $u_0$ solves 

\begin{equation}
	\begin{array}{rl}
	u_0 \geq 0 & \mbox{on } M_0 \cap B_1 \\
	\Delta u_0 \leq 0 & \mbox{ in } B_1 \\
	\Delta u_0 = 0 & \mbox{away from } \{x \in M_0|u_0(x)=\phi_0(x)\}\\
	\end{array}\end{equation}

here $M_0$ is the tangent hyperplane to $M$ at 0. Now Almgren's monotonicity formula in its original form may be applied directly to $u_0$: since (7) says that the monotonicity formula for $u_0$ is constant then it must be that $u_0$ is homogeneous of degree $(\Phi_u(0)-n+1)/2$, and this concludes the proof.

\end{proof}

With the lemma proven we proceed to finish the proof of Theorem \ref{thm:blowup}.

\begin{proof}

Assume first that

$$\limsup \limits_{r \to 0}\frac{d_r}{r^{3/2}} =  +\infty $$

 then lemma \ref{lem:blowup} tells us that there exists a homogenous function $u_0 \in H^1$ with degree $(\Phi(0)-n+1)/2$ that solves the thin obstacle problem for a null osbtacle on a hyperplane, in this case  Athanasopoulos and Caffareli proved that $u_0$ is of class $C^{1,1/2}$ on either side of the hyperplane, therefore the degree of $u_0$ must be no less than $3/2$, so we must have

$$\Phi_u(0)\geq n+2$$

The second case is dealt with a standard argument, which we include for completeness. Since $\limsup r^{-3/2}d_r < + \infty$ we have $d_r \leq Cr^{3/2}$, for some $C$; if we had $d_{r_k} \leq {r_k}^{3/2}$ for a sequence $r_k$ going to zero, it would follow that at once that $\Phi_u(0)=n+2$. Then we may assume we have $d_r \geq r^{3/2}$ for all small $r$, that is, we have for all small $r$ (using the definition of $d_r$)

\begin{equation}\label{eqn:case2}
r^{n+2}\leq F_u(r) \leq C^{n+2}
 \end{equation}

suppose now that there exists a sequence $\{r_k\}$ ($r_k >0$) that is going to zero and a small $\epsilon_0>0$ for which we have $\Phi_u(r_k)\leq n+2-\epsilon_0$ for all $k$. Taking the logarithm in \ref{eqn:case2} we have for all $k,l$ with $l>k$

$$(n+2)(\log r_l - \log r_k)-C \leq \log F_u(r_k) - \log F_u(r_l) $$

$$= \int_{r_l}^{r_k}\frac{d}{dr}\log F_u(r) dr \leq (n+2-\epsilon_0)(\log r_k - \log r_l)$$

the last inequality shows that $n+2 \leq n+2 -\epsilon_0$ by just taking $l$ big enough so that $\log r_k - \log r_l > 0$, so we have a contradiction and the theorem is proved.

\end{proof}

\section{Proof of Theorem 1}

Finally we prove $u$ decays as desired near free boundary points, the rest of the proof is word by word just as in \cite{CSS08}, we present it here for completeness.

\begin{lemma}\label{lem:Fdecay}
Suppose $\liminf \limits_{s \to 0^+}\Phi_u(s)\geq \mu$, then $F_u(r)\leq Cr^\mu$.
\end{lemma}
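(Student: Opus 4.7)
The plan is to obtain the decay of $F_u$ directly by integrating the almost monotonicity estimate for $\Phi_u$. Under the hypothesis $\liminf_{s\to 0^+}\Phi_u(s)\geq\mu$, Corollary \ref{cor:muest} yields a universal constant $C>0$ with
$$\Phi_u(r)\geq \mu-Cr^{\epsilon_0}\quad\text{for every }r<1.$$
(More precisely, the corollary gives this with $\mu$ replaced by $\liminf_{s\to 0^+}\Phi_u(s)$, which by assumption dominates the $\mu$ in the statement.) This is the only ingredient from the previous sections that I need.

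Next I would reintroduce the auxiliary function $G(r):=\max(F_u(r),r^{n+2})$, which by definition satisfies $\Phi_u(r)=r\,\tfrac{d}{dr}\log G(r)$ wherever the maximum is differentiably attained. Since $F_u$ is smooth in $r$ (from the interior regularity of $u$) and $r^{n+2}$ is smooth, $G$ is locally Lipschitz and strictly positive on $(0,1]$, so $\log G$ is absolutely continuous on every $[r,1]\subset(0,1]$ and the fundamental theorem of calculus applies to its a.e.\ derivative. Rewriting the monotonicity inequality as
$$\frac{d}{dr}\log G(r)\;\geq\;\frac{\mu}{r}-Cr^{-1+\epsilon_0}\qquad\text{a.e. in }(0,1),$$
I integrate from $r$ to $1$ to obtain
$$\log G(1)-\log G(r)\;\geq\;-\mu\log r-\frac{C}{\epsilon_0}\bigl(1-r^{\epsilon_0}\bigr),$$
which rearranges to
$$\log G(r)\;\leq\;\log G(1)+\mu\log r+\frac{C}{\epsilon_0}.$$
Exponentiating and using $F_u(r)\leq G(r)$ gives $F_u(r)\leq C' r^{\mu}$, with $C'=G(1)\exp(C/\epsilon_0)$ depending only on universal quantities and on $\int_{B_1}u^2$.

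I do not expect any genuine obstacle here; the nontrivial work was already done in establishing the almost monotonicity of $\Phi_u$. The one point worth a brief remark is the use of $G$ rather than $F_u$ itself: this is exactly what absorbs the possibility that $F_u(r)$ is very small (where $\Phi_u$ is not directly giving information about $F_u$), since in that regime the bound $F_u(r)\leq r^{n+2}\leq r^{\mu}$ is automatic as soon as $\mu\leq n+2$, and a fortiori combined with the estimate for $G$ gives $F_u(r)\leq C' r^\mu$ in all cases. The integrable singularity $r^{-1+\epsilon_0}$ of the error term, coming from $\epsilon_0>0$, is what makes the whole scheme work.
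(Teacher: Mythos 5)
Your proof is correct and follows essentially the same route as the paper: introduce $\hat F(r)=\max(F_u(r),r^{n+2})$, invoke Corollary \ref{cor:muest} to get $\tfrac{d}{dr}\log\hat F(r)\geq \mu/r - Cr^{-1+\epsilon_0}$, integrate from $r$ to $1$, and exponentiate, using $F_u\leq \hat F$ at the end. Your extra remarks on the absolute continuity of $\log\hat F$ and on the regime where the maximum is attained by $r^{n+2}$ are just slightly more careful versions of what the paper leaves implicit.
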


\begin{proof}
 
Let $\hat{F}(r)=\max(F_u(r),r^{n+2})$, by corollary \ref{cor:muest} there are positive constants $C$ and $\epsilon$ such that

$$\frac{d}{dr}\log \hat{F}(r) \geq \frac{\mu}{r}-Cr^{-1+\epsilon}$$

integrating this inequality from $r$ to $1$ we obtain

$$\log \hat{F}(1) - \log \hat{F}(r) \geq -\mu \log r -C(r^\epsilon+1)$$

which is the same as

$$\log \hat{F}(r) \leq \mu \log r + O(1)$$

taking the exponential on both sides we get for some constant $C$

$$\hat{F}(r) \leq Cr^\mu$$

i.e. $F_u(r)\leq Cr^\mu$, and the assertion is proved.

\end{proof}

The following lemma says that a normalized solution decays the way we want near every free boundary point, and 	by the discussion in section 2 this will prove Theorem \ref{thm:main}.

\begin{lemma}\label{lem:avrgdecay}
Let $u$ be a normalized solution of the thin obstacle problem, suppose that for some $C$ we have $F_u(r) \leq Cr^{n+2}$ for all small $r$, then there is a constant $C_1$ such that for all small $r$

$$\sup \limits_{B_r(0)} |u| \leq C_1r^{1+1/2}$$

\end{lemma}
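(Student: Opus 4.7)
The plan is to reduce the pointwise estimate to a uniform $L^\infty$-bound on a rescaled version of $u$. Integrating the hypothesis, $\int_{B_r} u^2\,dx = \int_0^r F_u(\rho)\,d\rho \leq Cr^{n+3}$. For each small $r>0$ I would set $v(x) := r^{-3/2}u(rx)$ on $B_1$; a change of variables gives $\int_{B_1} v^2\,dx = r^{-(n+3)}\int_{B_r} u^2\,dx \leq C$ uniformly in $r$, and the desired conclusion $\sup_{B_{r/2}}|u|\leq C_1 r^{3/2}$ is equivalent to the uniform bound $\sup_{B_{1/2}}|v|\leq C_1$.

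To get that bound, I would first verify that $v$ (after division by an absolute constant to enforce $\int_{B_1} v^2 \leq 1$) is itself a normalized solution of the thin obstacle problem on $B_1$, with rescaled coefficients $a^{(r)}_{ij}(x)=a_{ij}(rx)$, hypersurface $M_r=r^{-1}M$, and obstacle $\phi_r(x)=r^{-3/2}\phi(rx)$ on $M_r$. The normalization at the origin ($a^{(r)}_{ij}(0)=\delta_{ij}$, $\phi_r(0)=0$, $\nabla_\tau\phi_r(0)=0$, and $0$ a free boundary point) transfers directly. The crucial input is $\beta>1/2$: from $|\phi(y)|\leq C|y|^{1+\beta}$ one obtains $\|\phi_r\|_{L^\infty(B_1\cap M_r)} \leq Cr^{\beta - 1/2}$, and the same exponent $r^{\beta - 1/2}$ controls the tangential gradient of $\phi_r$ and its $C^\beta$ seminorm. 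Likewise, $M_r$ is $Cr^\beta$-flat in $B_1$ and $\|a^{(r)}_{ij}-\delta_{ij}\|_\infty \leq Cr$. All of these quantities are uniformly bounded (indeed, tend to zero) as $r\to 0$, so $v$ satisfies the hypotheses of Theorem \ref{thm:almostopt} with universal constants. Applying that theorem yields $\sup_{B_{1/2}}|v|\leq C_1$, and undoing the rescaling gives $\sup_{B_{r/2}}|u|\leq C_1 r^{3/2}$.

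The main obstacle is verifying that the constant produced by Theorem \ref{thm:almostopt} (and ultimately by the compactness argument of Lemma \ref{lem:compactness}) depends only on universal quantities — ellipticity bounds and the $C^{1,\beta}$/$C^{1,\gamma}$ data norms — and not on anything that might degenerate under rescaling. Since the rescaled data is actually \emph{improving} as $r\to 0$ (every smallness parameter scales like a positive power of $r$, precisely because $\beta>1/2$), this dependence is benign and $C_1$ can be chosen independent of $r$. The remainder of the argument is a direct scaling computation.
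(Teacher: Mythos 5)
Your proposal is correct, but it takes a genuinely different route from the paper. The paper argues pointwise: for $x_0\in B_r(0)$ it compares $u(x_0)^2$ with the spherical averages of $u^2$ centered at $x_0$, noting that the divergence-theorem identity for $\int_{\partial B_\tau(x_0)}2uu_\nu\,d\sigma$ produces (up to harmless terms) only the jump contribution $\int_{M\cap B_\tau(x_0)}2u[u_\nu]\,d\sigma$, which is $O(\tau^{n-1+1+\alpha+\beta})$ by the $C^{1,\alpha}$ regularity and $\alpha+\beta>1$; hence the averages of $u^2$ differ from $u(x_0)^2$ by $O(\tau^3)$, and the hypothesis $F_u(r)\le Cr^{n+2}$, averaged over $B_{2r}(0)\supset B_r(x_0)$, gives $u(x_0)^2\le C r^3$ directly, with explicit constants and no compactness. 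You instead rescale, $v=r^{-3/2}u(r\cdot)$, and appeal to uniform interior estimates for the rescaled normalized solutions; in your argument the hypothesis $\beta>1/2$ enters through the smallness of the rescaled data ($\|\phi_r\|_\infty\lesssim r^{\beta-1/2}$, flatness of $M_r\lesssim r^{\beta}$, $\|a^{(r)}-\delta\|_\infty\lesssim r$), which you verify correctly, whereas in the paper the same hypothesis enters through the jump-term estimate. One remark: what you actually need is not Theorem \ref{thm:almostopt} as stated (its content is decay at the free boundary point), but the quantitative interior bound behind it: a solution with $\int_{B_1}v^2\le C$ and data in a fixed class satisfies $\sup_{B_{1/2}}|v|\le C_1$ with $C_1$ depending only on these quantities. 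This is precisely the uniform $C^{1,\alpha_0}$/local boundedness estimate of Caffarelli that the paper itself invokes in Lemma \ref{lem:compactness}, so your appeal is legitimate, but citing that estimate directly would make the claimed independence of $C_1$ from $r$ transparent. Both arguments rest on the same inputs (almost optimal regularity and $\beta>1/2$); yours is shorter and softer, the paper's is more elementary and self-contained at this stage, and after the substitution $r\to 2r$ the two conclusions coincide.
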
	

\begin{proof}

Let $x_0 \in B_r(0)$, we claim that  that there is a constant $C$ such that

\begin{equation}
\left | \frac{1}{\omega_n \tau^{n}}\int_{B_\tau(x_0)}u^2d\sigma-u(x_0)^2 \right | \leq C\tau^{3}
\end{equation}

this estimate will follow from the same estimate where the average on $B_\tau(x_0)$ is replaced by the average on $\partial B_\tau(x_0)$, then we have (as it is usually done for instance when proving the mean value theorem)

\begin{equation}\frac{1}{\alpha_n\tau^{n-1}}\int_{\partial B_\tau(x_0)}u^2d\sigma-u(x_0)^2=\int_0^\tau\frac{1}{\alpha_nt^{n-1}}\int_{\partial B_t(x_0)}2uu_nd\sigma dt\end{equation}

but now, using integration by parts we can see that

$$\int_{\partial B_\tau(x_0)}2uu_nd\sigma=\int_{B_\tau(x_0) \cap M}2u[u_\nu]d\sigma$$

then arguing as in the proof of Lemma \ref{lem:keylem} we see that

$$\left |\int_{B_\tau(x_0) \cap M}2u[u_\nu]d\sigma \right |\leq Cr^{n-1+(1+\alpha+\beta)}$$

since we have an $\alpha$ for which $\alpha+\beta>1$ we conclude that the right hand side in (13) is controlled in absolute value by $C\tau^3$, that is, we have

$$\left | \frac{1}{\alpha_n\tau^{n-1}}\int_{\partial B_\tau(x_0)}u^2d\sigma-u(x_0)^2 \right | \leq C\tau^3$$

and this proves (12), on the other hand, if we take $|x_0|\leq r$, then $B_{r}(x_0) \subset B_{2r}(0)$ so	

\begin{equation}\frac{1}{\omega_n r^{n}}\int_{B_r(x_0)}u^2d\sigma \leq \frac{2^n}{\omega_n (2r)^{n}}\int_{B_{2r}(0)}u^2d\sigma\end{equation}

but we know the decay of the right hand side thanks to our assumption on $F_u(r)$, so it is less than $Cr^3$ (as can be seen integrating in spherical coordinates). Putting this together, we can estimate the size of $u(x_0)^2$

$$u(x_0)^2\leq \left | \frac{1}{\omega_nr^n}\int_{B_r(x_0)}u^2d\sigma-u(x_0)^2 \right |$$

$$+\frac{1}{\omega_n r^{n}}\int_{B_r(x_0)}u^2d\sigma$$

and by (12)

$$u(x_0)^2 \leq Cr^3 + \frac{2^n}{\omega_n (2r)^{n}}\int_{B_{2r}(0)}u^2d\sigma \leq C_1r^3$$

and this proves the lemma.

\end{proof}

Now we can prove the optimal regularity: Theorem 4 allows us to apply Lemma 5 with $\mu=n+2$, we just plug this in Lemma 6 and we get the desired decay of normalized solutions at free boundary points, and this completes the proof of Theorem 1.

\begin{acknowledgements}
This work was done as part of my doctoral studies under the supervision of Prof. Luis Caffarelli, I would like to thank him for his encouragement and guidance. I would also like to express my sincere thanks to Russell Schwab and Luis Silvestre for proofreading an earlier draft of this manuscript.

\end{acknowledgements}

\bibliographystyle{amsplain}
\bibliography{Signorini_Almgren}   

\providecommand{\bysame}{\leavevmode\hbox to3em{\hrulefill}\thinspace}
\providecommand{\MR}{\relax\ifhmode\unskip\space\fi MR }
\providecommand{\MRhref}[2]{%
  \href{http://www.ams.org/mathscinet-getitem?mr=#1}{#2}
}
\providecommand{\href}[2]{#2}
\begin{thebibliography}{10}

\bibitem{AC04}
I.~Athanasopoulos and L.~A. Caffarelli, \emph{Optimal regularity of lower
  dimensional obstacle problems}, Zap. Nauchn. Sem. S.-Peterburg. Otdel. Mat.
  Inst. Steklov. (POMI) \textbf{310} (2004), no.~Kraev. Zadachi Mat. Fiz. i
  Smezh. Vopr. Teor. Funkts. 35 [34], 49--66, 226. \MR{MR2120184 (2006i:35053)}

\bibitem{ACS06}
I.~Athanasopoulos, L.~A. Caffarelli, and S.~Salsa, \emph{The structure of the
  free boundary for lower dimensional obstacle problems}, Amer. J. Math.
  \textbf{130} (2008), no.~2, 485--498. \MR{MR2405165}

\bibitem{C79}
L.~A. Caffarelli, \emph{Further regularity for the {S}ignorini problem}, Comm.
  Partial Differential Equations \textbf{4} (1979), no.~9, 1067--1075.
  \MR{MR542512 (80i:35058)}

\bibitem{CS06}
Luis Caffarelli and Luis Silvestre, \emph{An extension problem related to the
  fractional {L}aplacian}, Comm. Partial Differential Equations \textbf{32}
  (2007), no.~7-9, 1245--1260. \MR{MR2354493}

\bibitem{CSS08}
Luis~A. Caffarelli, Sandro Salsa, and Luis Silvestre, \emph{Regularity
  estimates for the solution and the free boundary of the obstacle problem for
  the fractional {L}aplacian}, Invent. Math. \textbf{171} (2008), no.~2,
  425--461. \MR{MR2367025}

\bibitem{DL72}
G.~Duvaut and J.-L. Lions, \emph{Les in\'equations en m\'ecanique et en
  physique}, Dunod, Paris, 1972, Travaux et Recherches Math{\'e}matiques, No.
  21. \MR{MR0464857 (57 \#4778)}

\bibitem{F77}
Jens Frehse, \emph{On {S}ignorini's problem and variational problems with thin
  obstacles}, Ann. Scuola Norm. Sup. Pisa Cl. Sci. (4) \textbf{4} (1977),
  no.~2, 343--362. \MR{MR0509085 (58 \#22987)}

\bibitem{F82}
Avner Friedman, \emph{Variational principles and free-boundary problems}, Pure
  and Applied Mathematics, John Wiley \& Sons Inc., New York, 1982, A
  Wiley-Interscience Publication. \MR{MR679313 (84e:35153)}

\bibitem{R78}
D.~Richardson, \emph{Variational problems with thin obstacles}, Ph.D. Thesis,
  Univ. of British Columbia, Vancouver, B.C. (1978).

\bibitem{S07}
Luis Silvestre, \emph{Regularity of the obstacle problem for a fractional power
  of the {L}aplace operator}, Comm. Pure Appl. Math. \textbf{60} (2007), no.~1,
  67--112. \MR{MR2270163 (2008a:35041)}

\end{thebibliography}

\end{document}